\newtheorem{theorem}{Theorem}[section]
\newtheorem{lemma}[theorem]{Lemma}
\newtheorem{corollary}[theorem]{Corollary}
\newtheorem{proposition}[theorem]{Proposition}
\theoremstyle{definition}
\newtheorem{definition}[theorem]{Definition}
\newtheorem{example}[theorem]{Example}
\newtheorem{notation}[theorem]{Notation}
\newtheorem{property}[theorem]{Property}
\theoremstyle{remark}
\newtheorem{remark}[theorem]{Remark}
\definecolor{DarkBlue}{rgb}{0,0.1,0.55}
\numberwithin{equation}{section}
\newcommand {\hide}[1]{}
\newcommand {\junk}[1]{}
\newcommand {\R} {{\textnormal{R}}}
\newcommand {\s}        {\mbox{\rm sign}}
\newcommand {\C}     {{\textnormal{C}}}
\newcommand {\Real}[1]   {\mbox{$\mathbb{R}^{#1}$}}
 \newcommand {\re}         {\Real{}}
\newcommand {\Z}  {\mathbb{Z}}
\newcommand {\ZZ} {{\rm Zer}}
\newcommand {\RR} {{\mathcal R}}
\newcommand {\la}   {{\langle}}
\newcommand {\ra}   {{\rangle}}
\newcommand {\eps} {{\varepsilon}}
\newcommand {\PP}     {\mathbb{P}} 
\newcommand {\T}      {{\mbox{\rm T}}}
\newcommand {\Zer} {{\rm Zer}}
\newcommand {\Cr} {{\rm Cr}}
\newcommand {\Pos} {{\rm Pos}}
\newcommand {\Def} {{\rm Def}}
\newcommand {\card} {{\rm card}}
\def\addots{\mathinner{\mkern1mu
\raise1pt\vbox{\kern7pt\hbox{.}}
\mkern2mu\raise4pt\hbox{.}\mkern2mu
\raise7pt\hbox{.}\mkern1mu}}
\newcommand{\HH}  {\mbox{\rm H}}
\newcommand{\case}[1]{\textbf{Case #1:}}
\newcommand{\zz}{\begin{flushright}$\Box$\end{flushright}}
\newcommand{\blist}
    {\begin{list}
    {\textbf{\arabic{enumi}.}}
    {\setlength{\leftmargin}{0.365 in}
    \setlength{\rightmargin}{0.365 in}
    \usecounter{enumi}}}
\newcommand{\alist}
    {\begin{list}
    {\textbf{(\alph{enumii})}}
    {\setlength{\leftmargin}{0.365 in}
    \setlength{\rightmargin}{0.365 in}
    \usecounter{enumii}}}
\newcommand{\rlist}
    {\begin{list}
    {\textbf{(\roman{enumiii})}}
    {\setlength{\leftmargin}{0.365 in}
    \setlength{\rightmargin}{0.365 in}
    \usecounter{enumiii}}}
\newcommand{\Rlist}
    {\begin{list}
    {\textbf{(\Roman{enumii})}}
    {\setlength{\leftmargin}{0.365 in}
    \setlength{\rightmargin}{0.365 in}
    \usecounter{enumii}}}
\newcommand{\elist}{\end{list}}
\newcommand{\Ext}{\text{Ext}}
\newcommand{\ep}{\varepsilon}
\begin{document}
\title[] {Refined bounds on the number of connected components of
sign conditions on a variety}
\author{Sal Barone}
\address{Department of Mathematics, Purdue University, West Lafayette,
  IN 47906, U.S.A.}
\email{sbarone@math.purdue.edu} \author{Saugata Basu}
\address{Department of Mathematics, Purdue University, West Lafayette,
  IN 47906, U.S.A.}
\email{sbasu@math.purdue.edu}


\subjclass{Primary 14P10, 14P25; Secondary 52C10, 52C45}
\date{\textbf{\today}}

\maketitle
\begin{abstract}
Let $\R$ be a real closed field, $\mathcal{P},\mathcal{Q} \subset
\R[X_1,\ldots,X_k]$ finite subsets of polynomials,
with the degrees
of the polynomials in $\mathcal{P}$ (resp. $\mathcal{Q}$) bounded by
$d$ (resp. $d_0$).
Let $V \subset \R^k$ be the real algebraic variety
defined by the polynomials in $\mathcal{Q}$ and suppose that the real
dimension of $V$ is bounded by $k'$. We prove that the number of
semi-algebraically connected components of the realizations of all
realizable sign conditions of the family $\mathcal{P}$ on $V$ is bounded
by
$$
\displaylines{
\sum_{j=0}^{k'}4^j{s +1\choose j}F_{d,d_0,k,k'}(j),}$$
where $s = \card \; \mathcal{P}$,
and $$F_{d,d_0,k,k'}(j)=
\textstyle\binom{k+1}{k-k'+j+1} \;(2d_0)^{k-k'}d^j\; \max\{2d_0,d \}^{k'-j}
+2(k-j+1)
.$$

In case $2 d_0 \leq d$, the above bound can be written simply as
$$
\displaylines{
\sum_{j = 0}^{k'} {s+1 \choose j}d^{k'} d_0^{k-k'}  O(1)^{k}
= (sd)^{k'} d_0^{k-k'} O(1)^k
}
$$
(in this form the bound was suggested by J. Matousek \cite{Matousek_private}).
Our result improves in certain cases (when $d_0 \ll d$)
the best known bound of
$$
\sum_{1 \leq j \leq k'}
     \binom{s}{j} 4^{j}  d(2d-1)^{k-1}
$$
on the same number proved in
\cite{BPR8}
in the case  $d=d_0$.

The distinction between the bound $d_0$ on the degrees
of the polynomials defining the variety $V$ and the bound $d$ on the degrees
of the polynomials in $\mathcal{P}$ that appears in the new bound
is motivated by several applications in discrete geometry
\cite{Guth-Katz,Matousek11b,Solymosi-Tao,Zahl}.
\end{abstract}

\section{Introduction}
Let $\R$ be a real closed field.
We denote by $\C$ the algebraic closure of
$\R$.
Let ${\mathcal P}$ be a finite subset of $\R[X_1,\ldots,X_k]$.
A  {\em sign condition} on
${\mathcal P}$ is an element of $\{0,1,- 1\}^{\mathcal P}$.

The {\em realization} of the sign condition $\sigma$ in a semi-algebraic
set $V \subset \R^k$ is the semi-algebraic set

\begin{equation}
\label{eqn:R(Z)}
\RR(\sigma,V)= \{x\in V\;\mid\;
\bigwedge_{P\in{\mathcal P}} \s({P}(x))=\sigma(P) \}.
\end{equation}

More generally, given any first order formula $\Phi(X_1,\ldots,X_k)$,
the realization of $\Phi$
in a semi-algebraic
set $V \subset \R^k$ is the semi-algebraic set

\begin{equation}
\label{eqn:R(Z)}
\RR(\Phi,V)= \{x\in V\;\mid\;
\Phi(x)\}.
\end{equation}

We  denote
the set of zeros
of ${\mathcal P}$ in $\R^k$  (resp. in $\C^k$) by
$$
\displaylines{
\ZZ({\mathcal P},\R^k)=\{x\in \R^k\;\mid\;\bigwedge_{P\in{\mathcal P}}P(x)= 0\}
}
$$

$$
\displaylines{
(\mbox{resp. }
\ZZ({\mathcal P},\C^k)=\{x\in \C^k\;\mid\;\bigwedge_{P\in{\mathcal P}}P(x)= 0\}
).
}
$$

The main problem considered in this paper is to obtain a tight bound
on the number of semi-algebraically connected components 
of the realizations of all realizable
sign conditions of a family of polynomials
$\mathcal{P} \subset \R[X_1,\ldots,X_k]$
in a variety $\ZZ(\mathcal{Q},\R^k)$ having
dimension $k' \leq k$, in terms of $s = \card \;\mathcal{P}, k,k'$ and
the degrees of the polynomials in $\mathcal{P}$ and $\mathcal{Q}$.

\subsection{History and prior results}
The problem of bounding the number of semi-algebraically
connected components (as well
as the higher Betti numbers) has a long history. The initial results
were obtained by  Ole{\u\i}nik and Petrovski{\u\i} \cite{OP}, and later by Thom
\cite{T} and Milnor \cite{Milnor2}, who proved a bound of $O(d)^k$
on the sum of the Betti numbers of any real algebraic variety in $\R^k$
defined by polynomials of  degree at most $d$. This result has been
generalized to arbitrary semi-algebraic sets in several different
ways. The reader is referred to \cite{BPR10} for a survey
of results in this direction and the references therein.

In \cite{PR} Pollack and Roy proved a bound of ${s \choose k}O(d)^k$
on the number of semi-algebraically connected components of the realizations of all realizable
sign conditions of a family of $s$ polynomials of degrees bounded by $d$.
The proof was based on  Ole{\u\i}nik-Petrovski{\u\i}-Thom-Milnor
bounds for algebraic sets, as well as some deformation
techniques and general position arguments. Similar results due to Alon
\cite{Alon} and
Warren \cite{Warren} only on the number of
realizable sign conditions were known before.

It was soon realized that in some applications,
notably in geometric transversal theory, as well as
in bounding the complexity of the configuration space in robotics, it is
useful to study the realizations of sign conditions of a family of
$s$ polynomials in $\R[X_1,\ldots,X_k]$ restricted to a real variety
$\ZZ(Q,\R^k)$ where the real dimension of the variety $\ZZ(Q,\R^k)$
can be much smaller than $k$. In \cite{BPR95} it was shown that the number
of semi-algebraically
connected components of the realizations of all realizable sign condition of
a family, $\mathcal{P} \subset \R[X_1,\ldots,X_k]$ of $s$ polynomials,
restricted to a real variety of dimension $k'$, where the degrees of the
polynomials in $\mathcal{P} \cup \{Q\}$ are
all bounded by $d$, is bounded by ${s \choose k'}O(d)^k$.
This last result was made more precise in \cite{BPR8}
where the authors bound (for each $i$) the sum of the
$i$-th Betti number over all
realizations of realizable sign conditions of a family of polynomials
restricted to a variety of dimension $k'$ by

$$
\displaylines{
\sum_{0 \leq j \leq k' - i}
     \binom{s}{j} 4^{j}  d(2d-1)^{k-1}.
}
$$
Notice that there is no undetermined constant in the above bound, and that
it generalizes
the bound in \cite{BPR95} which is the special case with $i=0$.
The technique of the proof uses a generalization of
the Mayer-Vietoris exact sequence in conjunction with the
 Ole{\u\i}nik-Petrovski{\u\i}-Thom-Milnor bounds on the Betti numbers of
real varieties.

In a slightly different direction,
an asymptotically tight bound 
(asymptotic with respect to the number of polynomials with fixed degrees and number of variables)
$$
\sum_{\sigma\in \{0,1,-1\}^\mathcal{P}} b_0(\sigma,\R^k)\leq \frac{(2d)^k}{k!}s^k+O(s^{k-1})
$$
was proved in \cite{BPR-tight}.  
This bound has the best possible leading term (as a function of $s$) but is 
not optimal with regards to the dependence on $d$, and thus is not directly
comparable to the results in the current paper.
\subsection{Main result}

In this paper we prove a bound on the number of
semi-algebraically
connected components over all realizable sign conditions of a family
of polynomials
in a variety.
However, unlike in the above bound
the role of the degrees of the polynomials defining the variety $V$
is distinguished from the role of the degrees of the polynomials in
the family $\mathcal{P}$.
This added flexibility seems to be necessary in certain applications
of these bounds in combinatorial geometry (notably in the recent paper
by Solymosi and Tao \cite{Solymosi-Tao}). We give another application
in the theory of geometric permutations in Section \ref{sec:applications}.

Our main result is the following theorem.
\begin{theorem}
\label{thm:main}
Let $\R$ be a real closed field, and
let $\mathcal{Q},\mathcal{P} \subset \R[X_1,\dots,X_k]$ be
finite subsets of polynomials
such that $\deg(Q)\leq d_0$ for all $Q\in \mathcal{Q}$,
$\deg P =d_P$ for all $P\in \mathcal{P}$,
and the real dimension of $\Zer(\mathcal{Q},\R^k)$ is
$k' \leq k$. Suppose also that $\card\; \mathcal{P} = s$,
and for $\mathcal{I}\subseteq \mathcal{P}$
let ${d}_\mathcal{I}=\prod_{P\in \mathcal{I}}d_P$.
Then,
$$
\displaylines{
\sum_{\sigma \in \{0,1,-1\}^{\mathcal{P}}}
b_0(\RR(\sigma,\ZZ(\mathcal{Q},\R^k)))
}
$$
is at most
$$
\displaylines{
\sum_{{\mathcal{I}\subset \mathcal{P}}\atop {\#\mathcal{I}\leq k'}}4^{\#\mathcal{I}}\left(
\textstyle\binom{k+1}{k-k'+\#\mathcal{I}+1} \;(2d_0)^{k-k'}{d}_\mathcal{I}\;\max_{P\in \mathcal{I}}\{2d_0,d_P \}^{k'-\#\mathcal{I}}+2(k-\#\mathcal{I}+1)\right).
}
$$
In particular, if $d_P\leq d$ for all $P\in \mathcal{P}$, 
we have that
$$
\displaylines{
\sum_{\sigma \in \{0,1,-1\}^{\mathcal{P}}}
b_0(\RR(\sigma,\ZZ(\mathcal{Q},\R^k)))
}
$$
is at most
$$
\displaylines{
\sum_{j=0}^{k'}4^j{s +1\choose j}\left(
\textstyle\binom{k+1}{k-k'+j+1} \;(2d_0)^{k-k'}d^j\;\max\{2d_0,d \}^{k'-j}
+2(k-j+1)\right).
}
$$
\end{theorem}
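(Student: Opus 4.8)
The plan is to follow the classical Pollack--Roy style argument, adapted to the variety setting as in \cite{BPR95,BPR8}, but keeping track of the degree $d_0$ of the polynomials defining $V = \Zer(\mathcal{Q},\R^k)$ separately from the degrees of the polynomials in $\mathcal{P}$. The overall structure has three ingredients: (1) a reduction from counting connected components of all realizable sign conditions to counting connected components of the intersection of $V$ with sign conditions involving only a bounded number $j \le k'$ of polynomials from $\mathcal{P}$ at a time; (2) a perturbation/deformation to replace the possibly badly-behaved variety $V$ by a bounded real variety of the same dimension, and the polynomials in each small subfamily $\mathcal{I}$ by generic perturbations so that the zero set of $\mathcal{I}$ meets $V$ transversally in a bounded nonsingular variety; and (3) an application of the Ole{\u\i}nik--Petrovski{\u\i}--Thom--Milnor bound on the number of connected components (indeed all Betti numbers) of a real variety, but carefully estimated as a product of the degrees rather than a uniform $d$, which is exactly where the refined factor $(2d_0)^{k-k'}{d}_\mathcal{I}\max\{2d_0,d_P\}^{k'-\#\mathcal{I}}$ comes from.

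First I would carry out the combinatorial reduction. By the standard argument (see \cite{PR,BPR8}), for each realizable sign condition $\sigma$ and each connected component $C$ of $\RR(\sigma,V)$, one can charge $C$ to a connected component of $\Zer(\mathcal{I},\R^k) \cap V$ for some $\mathcal{I} \subseteq \mathcal{P}$ with $\#\mathcal{I} \le k'$, after possibly perturbing the polynomials in $\mathcal{I}$ by $\pm\eps$ for infinitesimals $\eps$ (this is the source of the factor $4^{\#\mathcal{I}}$: each polynomial in $\mathcal{I}$ contributes a choice of sign and a choice of which of the two shifted hypersurfaces $P = \pm\eps$ to use, and the dimension argument forces $\#\mathcal{I} \le k' = \dim V$). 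Thus
$$
\sum_{\sigma} b_0(\RR(\sigma,V)) \;\le\; \sum_{\#\mathcal{I}\le k'} 4^{\#\mathcal{I}}\, b_0\!\left(\Zer(\mathcal{I}_\eps,\R\langle\eps\rangle^k)\cap \Ext(V,\R\langle\eps\rangle)\right),
$$
where $\mathcal{I}_\eps$ denotes the perturbed family and I would insert the $+2(k-\#\mathcal{I}+1)$ error term to account for the deformation of $V$ itself to a bounded variety (replacing $\mathcal{Q}$ by $\sum Q^2 \pm \delta$ or a similar device, or by adding $\delta(X_1^{2N}+\cdots+X_k^{2N}+X_0^2 - \text{large})$-type terms), which changes the number of components by a controlled additive quantity.

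Next I would bound $b_0\bigl(\Zer(\mathcal{I}_\eps,\cdot)\cap \Ext(V,\cdot)\bigr)$. After the deformation, this is (the extension to a real closed extension of) a bounded real variety cut out by the polynomials in $\mathcal{Q}$ together with the $\#\mathcal{I}$ perturbed polynomials in $\mathcal{I}$. I would apply the refined Ole{\u\i}nik--Petrovski{\u\i} bound: the sum of Betti numbers of a real variety defined in $\R^k$ by polynomials whose product of degrees (in a suitable sense — here the relevant count is $d_0^{k-k'}$ for the $Q$'s contributing the codimension-$(k-k')$ part, times $\prod_{P\in\mathcal{I}}d_P$ for the $\mathcal{I}$ part, with the remaining $k'-\#\mathcal{I}$ dimensions absorbed into a $\max\{2d_0,d_P\}$ factor) is bounded by $\binom{k+1}{k-k'+\#\mathcal{I}+1}(2d_0)^{k-k'}{d}_\mathcal{I}\max_{P\in\mathcal{I}}\{2d_0,d_P\}^{k'-\#\mathcal{I}}$. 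Here the factor $2$ in $2d_0$, $2d$ arises from the squaring/summing trick used to present $V$ as a single equation and from the $\pm\eps$ perturbations, as in the $d(2d-1)^{k-1}$ appearing in \cite{BPR8}. Summing over all $\mathcal{I}$ with $\#\mathcal{I}\le k'$ gives the first displayed bound; specializing $d_P \le d$ and observing that the number of subsets $\mathcal{I}$ of size $j$ is $\binom{s}{j} \le \binom{s+1}{j}$ (the slightly weaker $\binom{s+1}{j}$ being convenient to absorb the $+2(k-j+1)$ terms cleanly) gives the second.

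The main obstacle — and the real content beyond \cite{BPR8} — is step (2)--(3): arranging the perturbations so that (i) $\Zer(\mathcal{I}_\eps,\cdot)\cap \Ext(V,\cdot)$ is genuinely a \emph{bounded nonsingular} variety of the expected dimension $k'-\#\mathcal{I}$, so that the Ole{\u\i}nik--Petrovski{\u\i} bound applies with the \emph{product} of degrees rather than the crude $O(d)^k$, and (ii) the degree bookkeeping honestly separates the $d_0$-contribution (codimension $k-k'$, coming from $\mathcal{Q}$) from the $d_P$-contribution (codimension $\#\mathcal{I}$, coming from $\mathcal{I}$) and the residual $\max\{2d_0,d_P\}^{k'-\#\mathcal{I}}$ factor for the surviving dimension. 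Making these perturbations simultaneously — a deformation of $V$ into a bounded nonsingular variety that does not lose dimension, together with the $\pm\eps$ shifts of the $\mathcal{I}$-polynomials — while keeping the degrees controlled and the transversality intact is the technically delicate part; I expect it to require a careful iterated infinitesimal construction (nested $\R\langle\eps_1\rangle\langle\eps_2\rangle\cdots$ extensions) and an appeal to a version of the critical-points / Morse-theoretic argument bounding $b_0$ of a bounded nonsingular hypersurface section, with the degrees entering multiplicatively via Bézout.
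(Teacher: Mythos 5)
Your combinatorial reduction (charging each connected component of a realization to a subset $\mathcal{I} \subset \mathcal{P}$ with $\#\mathcal{I} \le k'$, with the $4^{\#\mathcal{I}}$ factor arising from the choice of sign and of $\pm\eps$-perturbation for each $P\in\mathcal{I}$) is essentially the same as in the paper. But the analytic heart of the argument — the step you call a ``refined Ole{\u\i}nik--Petrovski{\u\i} bound'' with the sum of Betti numbers controlled by a product of individual degrees — is precisely what does \emph{not} exist for real varieties and is the whole point of the paper. There is no version of the Ole{\u\i}nik--Petrovski{\u\i}--Thom--Milnor bound in which the degrees of the defining polynomials enter multiplicatively: that is a B\'ezout-type statement, and the paper (Example~\ref{eg:counterexample}) exhibits polynomials $P_1,\dots,P_m$ whose real zero set has far more connected components than $\prod_i \deg P_i$. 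Worse, you say the factor $2$ in $2d_0$ ``arises from the squaring/summing trick used to present $V$ as a single equation,'' but the OPTM sum-of-squares route is exactly what the paper must abandon, since squaring and summing collapses all degrees to a single $\max$ and makes it impossible to separate $d_0$ from $d$.

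What the paper actually does at this step is qualitatively different. It replaces $\ZZ(Q,\R^k)$ by an infinitesimal tube bounded by the smooth hypersurface $\ZZ(\Def(Q,H,\zeta),\cdot)$, cuts out a $k'$-dimensional \emph{complete intersection} $W$ inside that hypersurface by taking the critical locus $\Cr_{k-k'-1}(\Def(Q,H,\zeta))$ of a generic projection (Propositions~\ref{prop:criticallocusontube} and \ref{prop:general_pos_for_cr}), intersects with at most $k'$ further generically perturbed members of $\mathcal{P}'$, and then — crucially — passes to the \emph{complex projective} complete intersection defined by the homogenizations. There the classical formulas for the Betti numbers of smooth complete intersections in $\PP^k_{\C}$ do give a product-of-degrees bound (Proposition~\ref{prop:ci_bound_NEW} and Corollary~\ref{cor:bijkm}), and the Smith inequality (Theorem~\ref{thm:smith}) transfers this back to a bound on $b_0$ of the real points. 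Nothing in your sketch supplies this mechanism; ``critical-points / Morse-theoretic argument with B\'ezout'' gestures at an alternative the paper mentions only in a remark, but you would still need the nonsingular-complete-intersection structure and either the Smith inequality or a multi-homogeneous B\'ezout count of real critical points, neither of which you develop. As a smaller point, the additive term $2(k-j+1)$ is not an error introduced by deforming $V$ to a bounded variety; it is the contribution of the low- and high-degree Betti numbers of a complex complete intersection (Lefschetz plus Poincar\'e duality) in Corollary~\ref{cor:bijkm}.
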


\subsection{A few remarks}
\begin{remark}
The bound in Theorem \ref{thm:main} is tight (up to a factor of
$ O(1)^k$).
It is instructive to examine the two extreme cases, when $k'=0$ and $k' = k-1$
respectively. When, $k'=0$, the variety $\ZZ(\mathcal{Q},\R^k)$ is zero
dimensional,
and is a union of at most $O(d_0)^k$ isolated points. The
bound in Theorem \ref{thm:main} reduces to $O(d_0)^{k}$ in this case,
and is thus tight.

When $k' = k-1$ and $2 d_0 \leq d$,
the bound in Theorem \ref{thm:main}
is equal to
$$
\sum_{j=0}^{k-1}4^j{s +1 \choose j}\left(
\binom{k+1}{j+2}\; 2d_0d^{k-1}+2(k-j+1)\right) = d_0 O(s d)^{k-1}.
$$
The following example shows that this is the best possible (again up to
$O(1)^k$).

\begin{example}
Let $\mathcal{P}$ be the set of $s$ polynomials in $X_1,\ldots,X_{k}$ each
of which is a product of $d$ generic linear forms. Let
$\mathcal{Q} = \{Q\}$, where
$$
Q = \prod_{1 \leq i \leq d_0} (X_{k} - i).
$$
It is easy to see that in this case
the number of semi-algebraically
connected components of all realizable
strict
sign conditions
of $\mathcal{P}$
(i.e.\
sign conditions  belonging to $\{-1,+1\}^{\mathcal{P}}$)
on $\ZZ(\mathcal{Q},\R^k)$ is
equal to
$$
\displaylines{
d_0 \sum_{i = 0}^{k-1} {sd \choose i} =
d_0 (\Omega(sd))^{k-1},
}
$$
since the intersection of
$
\displaystyle{
\bigcup_{P \in \mathcal{P}}\ZZ(P, \R^{k})
}
$
with the hyperplane defined by $X_{k} = i$ for each $i$, $i= 1,\ldots,d_0$, is
homeomorphic to an union of
of $s d$ generic hyperplanes in $\R^{k-1}$, and the number of
connected components of the complement of
the union of $s d$ generic hyperplanes in $\R^{k-1}$ is precisely
$\sum_{i = 0}^{k-1} {sd \choose i}$.
\end{example}
\end{remark}

\begin{remark}
Most bounds on the number of semi-algebraically connected components
of real algebraic varieties are stated in terms of the maximum of
the degrees of the polynomials defining the variety (rather than in terms
of the degree sequence). One reason behind this is the well-known fact that a
``Bezout type'' theorem is not true for real algebraic varieties. The
number of semi-algebraically connected components (indeed even isolated
zeros) of a set of polynomials $\{P_1,\ldots,P_m \} \subset
\R[X_1,\ldots,X_k,X_{k+1}]$
with degrees $d_1,\ldots,d_m$ can
be greater than the product $d_1\cdots d_m$, as can be seen in the
following example.

\begin{example}
\label{eg:counterexample}
Let $\mathcal{P} =
\{P_1,\ldots,P_m\}\subset \R[X_1,\ldots,X_{k+1}]$ be defined as follows.
$$
\displaylines{
P_1 = \sum_{i=1}^{k} \prod_{j=1}^d (X_i - j)^2 \cr
P_j = \prod_{i=1}^{m- j+2}(X_{k+1} - i), 2\leq j \leq m.
}
$$
Let $\mathcal{P}_i = \{P_1,\ldots,P_i\}$.
Notice that for each $i, 1\leq i < m$, $\ZZ(\mathcal{P}_i,\R^{k+1})$
strictly contains $\ZZ(\mathcal{P}_{i+1},\R^{k+1})$.
Moreover, $b_0(\ZZ(\mathcal{P},\R^{k+1})) = 2d^k$, while the product of the
degrees of the polynomials in $\mathcal{P}$ is $2 d m!$. Clearly, 
for $d$ large enough 
$2d^k > 2 d m!$.
\end{example}
\end{remark}

\begin{remark}
Most of the previously known bounds on the Betti numbers of
realizations of sign conditions relied ultimately on the
 Ole{\u\i}nik-Petrovski{\u\i}-Thom-Milnor bounds on the Betti numbers of real
varieties. Since in the proofs of these bounds
the finite family of polynomials defining a given real variety is replaced
by a single polynomial by taking a sum of squares, it is not possible
to separate
out the different roles played by the degrees of the polynomials
in $\mathcal{P}$ and those in $\mathcal{Q}$. The technique used in this
paper avoids using the  Ole{\u\i}nik-Petrovski{\u\i}-Thom-Milnor bounds, but uses directly
classically known formulas for the Betti numbers of smooth, complete
intersections in complex projective space. The bounds obtained from these
formulas depend more delicately on the individual degrees of the polynomials
involved (see Corollary
\ref{cor:bijkm}),
and this allows us to separate the roles of $d$ and $d_0$ in our
proof.
\end{remark}

\subsection{Outline of the proof of Theorem \ref{thm:main}}
The main idea behind our improved bound is to reduce the problem of bounding
the number of semi-algebraically connected components of all sign conditions
on a variety to the problem of bounding the sum of the
$\mathbb{Z}_2$-Betti
numbers of certain smooth complete intersections in complex projective
space.
This is done as follows. First assume that
$\ZZ(\mathcal{Q},\R^k)$ is bounded. The general case is reduced to
this case by an initial step using the conical triviality of semi-algebraic
sets at infinity.

Assuming that $\ZZ(\mathcal{Q},\R^k)$ is bounded, and
letting $Q = \sum_{F \in \mathcal{Q}} F^2$,
we consider another polynomial $\Def(Q,H,\zeta)$ which is an
infinitesimal perturbation of $Q$.
The basic semi-algebraic set, $T$, defined by $\Def(Q,H,\zeta) \leq 0$
is a semi-algebraic subset of $\R\langle\zeta\rangle^k$ (where
$\R\langle\zeta\rangle$ is the field of algebraic Puiseux series
with coefficients in $\R$, see Section \ref{subsec:puiseux} below for properties of
the field of Puiseux series that we need in this paper). The
semi-algebraic set $T$ has the property
that for each semi-algebraically
connected component $C$ of $\ZZ(Q,\R^k)$ there exists a semi-algebraically
connected component $D$ of $T$, which is bounded over $\R$ and such that
$\lim_\zeta D = C$ (see Section \ref{subsec:puiseux}
for definition of $\lim_\zeta$).
The semi-algebraic set $T$ should be thought of as an
infinitesimal ``tube'' around $\ZZ(Q,\R^k)$, which is bounded by a smooth
hypersurface (namely, $\ZZ(\Def(Q,H,\zeta),\R\langle\zeta\rangle^k)$).
We then show it is possible to cut out a $k'$-dimensional
subvariety, $W$ in $\ZZ(\Def(Q,H,\zeta),\R\langle\zeta\rangle^k)$,
such that (for generic choice of co-ordinates) in fact
$\lim_\zeta W = \ZZ(Q,\R^k)$ (Proposition
\ref{prop:criticallocusontube}),
and moreover the homogenizations of
the polynomials defining $W$
define a non-singular complete intersection in $\PP_{\C\langle\zeta\rangle}^k$
(Proposition \ref{prop:general_pos_for_cr}).
$W$ is defined by
$k-k'$ forms of
degree at most $2 d_0$.
In order to bound the number of semi-algebraically connected components
of realizations of sign conditions of the family $\mathcal{P}$ on
$\ZZ(Q,\R^k)$,
we need to bound the number of semi-algebraically
connected components of the intersection
of $W$ with the zeros of certain infinitesimal perturbations of
polynomials in $\mathcal{P}$ (see Proposition \ref{prop:main} below).
The number of cases that we need to consider
is bounded by ${O(s) \choose k'}$, and
again each such set of polynomials define
a non-singular
complete intersection of 
$p,\; k-k' \leq p \leq k$
hypersurfaces in $k$-dimensional
projective space  over an appropriate algebraically closed field,
$k-k'$ of which are defined
by forms having degree at most $2 d_0$ and the remaining 
$m = p - k +k'$
of degree bounded by $d$.
In this situation, there are classical formulas known for the Betti numbers
of such varieties, and they imply a bound of
$\binom{k+1}{m+1}(2d_0)^{k-k'} d^{k'}+O(k)$ 
on the sum of the Betti numbers of such
varieties (see Corollary \ref{cor:bijkm} below).
The bounds on the sum of the Betti numbers of these projective complete
intersections in the algebraic closure imply
using the well-known Smith inequality (see
Theorem \ref{thm:smith}) a bound on the number of semi-algebraically
connected components of the real parts of these varieties, and in particular
the number of bounded components.
The product of the two bounds,
namely the combinatorial bound on the number of different
cases and the algebraic part depending on the degrees, summed
appropriately lead to the claimed bound.

\subsection{Connection to prior work}
The idea of approximating an arbitrary real variety of
dimension $k'$ by a complete intersection was used in \cite{BPR95b} to give
an efficient algorithm for computing sample points in each semi-algebraically
connected component of all realizable
sign conditions of a family of polynomials restricted to the variety.
Because of complexity issues related to algorithmically choosing a generic
system of co-ordinates however,
instead of choosing a single generic system of co-ordinates,
a finite universal family of different co-ordinate systems
was used to approximate the variety. Since in this paper we are
not dealing with algorithmic complexity issues, we are free to choose
generic co-ordinates.
Note also that the idea of bounding the number of semi-algebraically
connected components of realizable sign conditions or of real
algebraic varieties, using known formulas for Betti numbers of non-singular,
complete intersections in complex projective spaces, and then
using Smith inequality,
have been used before
in several different
settings (see \cite{Bas05-first-Kettner} in the case of semi-algebraic
sets defined by quadrics and \cite{Benedetti-Loeser} for arbitrary real
algebraic varieties).

The rest of the paper is organized as follows. In Section \ref{sec:prelim},
we state some known results that we will need to prove the main theorem. These
include explicit recursive
formulas for the sum of Betti numbers of non-singular, complete
intersections of complex projective varieties
(Section \ref{subsec:Betti}), the Smith inequality
relating the Betti numbers of complex varieties defined over $\R$ with those
of their real parts (Section \ref{subsec:Smith}),
some results about generic choice of co-ordinates (Sections \ref{subsec:polar},
\ref{subsec:generic}), and finally a few facts about non-archimedean
extensions and Puiseux series that we need for making perturbations
(Section \ref{subsec:puiseux}).
We prove the main theorem in Section \ref{sec:main}.

\section{Certain Preliminaries}
\label{sec:prelim}
\subsection{The Betti numbers of a non-singular complete intersection
in complex projective space}
\label{subsec:Betti}

If $\mathcal{P}$ is a finite subset of $\R[X_1,\ldots,X_k]$ consisting of homogeneous polynomials we denote
the set of zeros
of ${\mathcal P}$ in $\PP_\R^k$ (resp. in $\PP_\C^k$) by
$$
\displaylines{
\ZZ({\mathcal P},\PP_\R^k)=\{x\in \PP_\R^k\;\mid\;\bigwedge_{P\in{\mathcal P}}P(x)= 0\}
}
$$

$$
\displaylines{
(\mbox{resp. }
\ZZ({\mathcal P},\PP_\C^k)=\{x\in \PP_\C^k\;\mid\;\bigwedge_{P\in{\mathcal P}}P(x)= 0\}
).
}
$$
For $P \in \R[X_1,\ldots,X_k]$ we will denote by $\ZZ(P,\R^k)$ (
resp. $\ZZ(P,\C^k)$, $\ZZ(P,\PP_\R^k)$, $\ZZ(P,\PP_\C^k)$) the variety
$\ZZ(\{P\},\R^k)$
(
resp. $\ZZ(\{P\},\C^k)$, $\ZZ(\{P\},\PP_\R^k)$, $\ZZ(\{P\},\PP_\C^k)$).

For any locally closed semi-algebraic set $X$,
we denote by $b_i(X)$ the dimension
of
\[
\HH_i(X,\Z_2),
\]
the $i$-th homology group of $X$
with coefficients in $\Z_2$. We refer to \cite[Chapter 6]{BPRbook2}
for the definition of homology groups in case the field $\R$ is not
the field of real numbers.
Note that $b_0(X)$ equals the number of semi-algebraically connected
components of the semi-algebraic set $X$.

For $\sigma\in \{0,1,-1\}^{\mathcal P}$ and $V \subset \R^k$ a closed
semi-algebraic set, we will denote by
$b_i(\sigma,V)$ the dimension of
\[
\HH_i(\RR(\sigma,V),\Z_2).
\]

We will denote by
\[
b(\sigma,V) = \sum_{i \geq 0} b_i(\sigma,V).
\]
\begin{definition}
A projective variety $X\subset\PP^k_{\C}$ of codimension~$n$ is a
\textit{non-singular complete intersection}
if it is the intersection of $n$ non-singular
hypersurfaces
in $\mathbb{P}^k_{\C}$
that meet transversally at each point of the
intersection.
\end{definition}

Fix  an $m$-tuple of natural numbers $\bar{d} = (d_1,\ldots,d_m)$. Let
$X_{\C} = \Zer(\{Q_1,\ldots,Q_m\},\mathbb{P}_{\C}^{k})$,
such that the degree of $Q_i$ is $d_i$,
denote a complex projective variety of
codimension~$m$ which is a non-singular complete intersection.
It is a classical fact that the Betti numbers of $X_{\C}$
depend only on the degree sequence and not on the specific $X_{\C}$. 
In fact, it follows from Lefshetz theorem on hyperplane sections 
(see, for example, \cite[Section 1.2.2]{Voisin2})
that 
\[
b_i(X_\C) = b_i(\mathbb{P}_{\C}^{k}), 
\; 0 \leq i < k - m.
\]
Also, by Poincar\'e duality we have that,
\[
b_{i} (X_\C) = 
b_{2(k-m) - i}(X_\C),
\; 0 \leq i \leq k - m.
\]
Thus, all the Betti numbers of $X_\C$
are determined once we know 
$b_{k-m}(X_\C)$ or equivalently
the Euler-Poinca\'re characteristic 
\[
\chi(X_\C)
= \sum_{i \geq 0} 
(-1)^i b_i(X_\C).
\]

Denoting $\chi(X_\C)$ 
by $\chi^k_m(d_1,\dots,d_m)$ (since it only depends on the degree
sequence) we have the following recurrence relation 
(see for example \cite{Benedetti-Loeser}).

\begin{equation}{\label{eqn:chi}}
\chi^k_m(d_1,\dots,d_m)=
\begin{cases}
k+1 &\text{ if } m=0\\
d_1 \ldots d_m &\text{ if } m=k\\
d_m\chi^{k-1}_{m-1}(d_1,\dots,d_{m-1})-(d_m-1)\chi^{k-1}_m(d_1,\dots,d_m)
&\text{ if } 0<m<k
\end{cases}
\end{equation}

We have the following inequality.

\begin{proposition}
\label{prop:ci_bound_NEW}
Suppose $1\leq d_1\leq d_2\leq \dots \leq d_m$.  The function $\chi^k_m(d_1,\ldots,d_m)$ satisfies
$$|\chi^k_m(d_1,\ldots,d_m)|\leq
\textstyle\binom{k+1}{m+1}
d_1\dots d_{m-1} d_m^{k-m+1}.$$
\end{proposition}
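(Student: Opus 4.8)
The plan is to prove the inequality by induction on $k$, reading off both the base cases and the inductive step directly from the recurrence \eqref{eqn:chi}. For a fixed $k$, the two extreme values of $m$ are handled by the closed forms: when $m=0$ the left-hand side equals $k+1$ and the asserted bound is $\binom{k+1}{1}$ times the empty product, so equality holds; when $m=k$ the left-hand side is $d_1\cdots d_k$ and the asserted bound is $\binom{k+1}{k+1}\,d_1\cdots d_{k-1}\,d_k$, again equality. In particular this also disposes of the base case $k=0$, where only $m=0$ occurs. Hence it remains to establish the inequality for $0<m<k$, assuming it holds with $k$ replaced by $k-1$ for every admissible value of the second index. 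Note that for such $m$ the subtuples $(d_1,\dots,d_{m-1})$ and $(d_1,\dots,d_m)$ appearing in the recurrence are still nondecreasing, so the induction hypothesis applies to them.

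First I would apply the triangle inequality to the third branch of \eqref{eqn:chi}; since $d_m\ge d_m-1\ge 0$, this gives
$$
|\chi^k_m(d_1,\ldots,d_m)|\ \le\ d_m\,|\chi^{k-1}_{m-1}(d_1,\ldots,d_{m-1})|\ +\ (d_m-1)\,|\chi^{k-1}_m(d_1,\ldots,d_m)|.
$$
Next I would feed each summand into the induction hypothesis, using the arithmetic $(k-1)-(m-1)+1=k-m+1$ and $(k-1)-m+1=k-m$. This bounds the right-hand side by
$$
d_m\binom{k}{m}d_1\cdots d_{m-2}\,d_{m-1}^{\,k-m+1}\ +\ (d_m-1)\binom{k}{m+1}d_1\cdots d_{m-1}\,d_m^{\,k-m}.
$$

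The remaining step is elementary. Writing $P=d_1\cdots d_{m-1}$, the inequality $d_{m-1}\le d_m$ together with $k-m\ge 1$ gives $d_1\cdots d_{m-2}\,d_{m-1}^{\,k-m+1}=P\,d_{m-1}^{\,k-m}\le P\,d_m^{\,k-m}$ (in the boundary case $m=1$ these products are empty, $P=1$, and this reduces to $1\le d_1^{\,k-1}$), while $d_m-1\le d_m$; hence the displayed quantity is at most $\bigl(\binom{k}{m}+\binom{k}{m+1}\bigr)\,P\,d_m^{\,k-m+1}$, which by Pascal's identity equals $\binom{k+1}{m+1}\,d_1\cdots d_{m-1}\,d_m^{\,k-m+1}$, completing the induction. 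I do not anticipate a genuine obstacle; the only points requiring care are the bookkeeping of exponents when descending from $(k,m)$ to $(k-1,m-1)$ and $(k-1,m)$, and spelling out the empty-product conventions in the boundary case $m=1$ so that the induction is literally valid.
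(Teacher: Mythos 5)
Your proof is correct and follows essentially the same route as the paper's: induction on $k$ via the recurrence \eqref{eqn:chi}, with the base cases $m=0$ and $m=k$ giving equality, the triangle inequality plus the induction hypothesis applied to both terms of the recursive branch, the observation $d_{m-1}\le d_m$ to shift the exponent, and Pascal's identity to finish. The only cosmetic difference is that the paper replaces $d_m-1$ by $d_m$ before invoking the induction hypothesis while you do so afterward; this has no bearing on the argument.
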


\proof
The proof is by induction in each of the three cases of Equation \ref{eqn:chi}.

\case{$m=0$}
$$\chi^k_0=k+1\leq \textstyle\binom{k+1}{1}=k+1  $$

\case{$m=k$}
$$\chi^k_m(d_1,\ldots,d_m)=d_1\ldots d_{m-1}d_m \leq
\textstyle\binom{k+1}{k+1}
d_1\ldots d_{m-1}d_m = d_1 \ldots d_{m-1} d_m$$

\case{$0<m<k$}
$$\begin{aligned}
  |\chi^k_m(d_1,\ldots,d_m)|=&\
  |d_m\chi^{k-1}_{m-1}(d_1,\dots,d_{m-1})-(d_m-1)\chi^{k-1}_{m}(d_1,\ldots,d_m)| \\ \leq
  &\ d_m|\chi^{k-1}_{m-1}(d_1,\dots,d_{m-1})|+d_m|\chi^{k-1}_m(d_1,\dots,d_m)| \\ \leq
  &\ d_m
  \textstyle\binom{k}{m}d_1\ldots d_{m-2}d_{m-1}^{(k-1)-(m-1)+1}+d_m
  \textstyle\binom{k}{m+1}d_1\ldots d_{m-1}d_{m}^{(k-1)-m+1}\\
  \overset{\ast}{\leq}
  &\
  \textstyle\binom{k}{m}d_1\dots d_{m-1}d_{m}^{k-m+1} +
  \textstyle\binom{k}{m+1}d_1\ldots d_{m-1}d_m^{k-m+1} \\ =
  &\ \textstyle\binom{k+1}{m+1} d_1\ldots d_{m-1}d_m^{k-m+1}
\end{aligned}$$
where the inequality $\overset{\ast}{\leq}$ follows from the
observation
$$d_{m-1}^{(k-1)-(m-1)+1} \leq d_{m-1}d_m^{k-m},$$
since $d_{m-1}\leq d_m$ by assumption, and the last equality is from the identity $\binom{k+1}{m+1}=\binom{k}{m}+\binom{k}{m+1}$.

\zz

Now let $\beta^k_m(d_1,\dots,d_m)$ denote 
$\sum_{i \geq 0} b_i(X_\C)$.

The following corollary is an immediate consequence of 
Proposition \ref{prop:ci_bound_NEW} and the remarks preceding it.

\begin{corollary}
\label{cor:bijkm}
\begin{eqnarray*}
\beta^k_m(d_1,\dots,d_m) &\leq&
\textstyle\binom{k+1}{m+1}d_1\ldots d_{m-1}d_m^{k-m+1}+2(k-m+1).
\end{eqnarray*}
\end{corollary}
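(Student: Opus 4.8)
The plan is to determine every Betti number of $X_\C$ except the middle one directly from the two facts recalled just before the statement, and then to control the middle one through the Euler characteristic. Put $n=k-m$ for the complex dimension of $X_\C$, so that $X_\C$ is a closed smooth manifold of real dimension $2n$ and $b_i(X_\C)=0$ for $i\notin\{0,\dots,2n\}$. For $0\le i<n$ the Lefschetz hyperplane theorem gives $b_i(X_\C)=b_i(\PP^k_\C)$, which equals $1$ when $i$ is even and $0$ when $i$ is odd; for $n<i\le 2n$, Poincar\'e duality gives $b_i(X_\C)=b_{2n-i}(X_\C)$ with $2n-i<n$, so again $b_i(X_\C)=1$ if $i$ is even and $0$ otherwise. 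Hence for every $i\ne n$ we have $b_i(X_\C)\in\{0,1\}$, and since there are exactly $n+1$ even integers in $\{0,\dots,2n\}$ it follows that
\[
\sum_{i\ne n}b_i(X_\C)\;\le\; n+1\;=\;k-m+1 .
\]

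Next I would bound $b_n(X_\C)$. Since $\chi(X_\C)=\sum_{i\ge 0}(-1)^i b_i(X_\C)=\chi^k_m(d_1,\dots,d_m)$, solving for the middle term gives $b_n(X_\C)=(-1)^n\bigl(\chi^k_m(d_1,\dots,d_m)-\sum_{i\ne n}(-1)^i b_i(X_\C)\bigr)$, whence
\[
b_n(X_\C)\;\le\;\bigl|\chi^k_m(d_1,\dots,d_m)\bigr|+\sum_{i\ne n}b_i(X_\C)\;\le\;\bigl|\chi^k_m(d_1,\dots,d_m)\bigr|+(k-m+1).
\]
Adding in the remaining Betti numbers yields $\beta^k_m(d_1,\dots,d_m)=\sum_{i\ge 0}b_i(X_\C)\le \bigl|\chi^k_m(d_1,\dots,d_m)\bigr|+2(k-m+1)$. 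Because both $\beta^k_m$ and $\chi^k_m$ depend only on the multiset $\{d_1,\dots,d_m\}$, I may assume $1\le d_1\le\cdots\le d_m$ and apply Proposition~\ref{prop:ci_bound_NEW}, which bounds $\bigl|\chi^k_m(d_1,\dots,d_m)\bigr|$ by $\textstyle\binom{k+1}{m+1}d_1\cdots d_{m-1}d_m^{k-m+1}$; this is exactly the asserted inequality.

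There is no real difficulty here — this is the ``immediate'' step the text refers to — but two points should be checked when writing it out. First, all Betti numbers are taken with $\Z_2$-coefficients, so one should note that both inputs survive: $\Z_2$-Poincar\'e duality needs no orientability hypothesis, while the Lefschetz isomorphism in degrees below $n$ holds integrally and the integral homology of $X_\C$ is torsion-free there (it agrees with that of $\PP^k_\C$), so by the universal coefficient theorem the $\Z_2$-Betti numbers agree with those of $\PP^k_\C$ in that range; and $\chi$ is independent of the coefficient field. Second, the degenerate values $m=0$ (where $X_\C=\PP^k_\C$ and the stated right-hand side is at least $k+1$) and $m=k$ (where $X_\C$ is a set of $d_1\cdots d_k$ points and the right-hand side is $d_1\cdots d_k+2$) are consistent with the same formula, so no separate treatment is required.
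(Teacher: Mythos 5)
Your argument is correct and fills in exactly the ``immediate'' step the paper has in mind: Lefschetz plus $\Z_2$-Poincar\'e duality give $b_i(X_\C)\in\{0,1\}$ for $i\ne k-m$ contributing at most $k-m+1$, the middle Betti number is then controlled by $|\chi^k_m|$ up to the same additive error, and Proposition~\ref{prop:ci_bound_NEW} finishes. This is the same route the paper intends, just written out.
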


\subsection{Smith inequality}
\label{subsec:Smith}
We state a version of the
Smith inequality which plays a crucial role in the proof of the main
theorem.
Recall that for any compact topological
space equipped with an involution, inequalities derived from the {\em Smith
exact sequence} allows one to bound the {\em sum} of the Betti numbers
(with $\Z_2$ coefficients)
of the fixed point set of the involution by the sum of the Betti numbers
(again with $\Z_2$ coefficients)
of the space itself (see for instance, \cite{Viro},
p.~131).
In particular, we have for a complex projective
variety defined by real forms, with the involution taken to be
complex conjugation, the following theorem.

\begin{theorem}[Smith inequality]\label{thm:smith}
Let ${\mathcal Q} \subset \R[X_1,\ldots,X_{k+1}]$ be a family of
homogeneous polynomials.
Then,
\[
b(\Zer({\mathcal Q},\PP^k_{\R}))\le b(\Zer({\mathcal Q},\PP^k_{\C})).
\]
\end{theorem}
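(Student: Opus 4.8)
The plan is to exhibit $\Zer(\mathcal{Q},\PP^k_{\R})$ as the fixed point set of the complex conjugation involution acting on $\Zer(\mathcal{Q},\PP^k_{\C})$, and then to invoke the abstract Smith inequality recalled just above the statement. Write $X_{\C}=\Zer(\mathcal{Q},\PP^k_{\C})$ and let $c\colon\PP^k_{\C}\to\PP^k_{\C}$ be the map induced by coordinatewise complex conjugation, $[z_0:\cdots:z_k]\mapsto[\bar z_0:\cdots:\bar z_k]$. Viewing $\C=\R[\sqrt{-1}]$ and $\PP^k_{\C}$ as a real algebraic variety over $\R$, the map $c$ is a semi-algebraic involution. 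Since every $Q\in\mathcal{Q}$ has coefficients in $\R$, one has $Q(c(z))=\overline{Q(z)}$, so $c(X_{\C})=X_{\C}$; moreover $X_{\C}$ is a closed and bounded (hence ``compact'') semi-algebraic subset of $\PP^k_{\C}$, so $c$ restricts to a semi-algebraic involution of $X_{\C}$.

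Next I would identify the fixed point set. A point $[z]\in\PP^k_{\C}$ satisfies $c([z])=[z]$ if and only if $\bar z_j=\lambda z_j$ for all $j$ and some $\lambda\in\C^{*}$; normalizing a nonzero coordinate to $1$ forces $\lambda=1$, hence $\bar z_j=z_j$ for all $j$, i.e. $[z]$ admits real homogeneous coordinates and so $[z]\in\PP^k_{\R}$. Conversely every point of $\PP^k_{\R}$ is fixed. Therefore the fixed point set of $c|_{X_{\C}}$ is exactly $\Zer(\mathcal{Q},\PP^k_{\R})$. Now the Smith inequality applies: in the archimedean case $\R=\mathbb{R}$, $X_{\C}$ is a compact, triangulable topological space carrying the involution $c$ with fixed point set $\Zer(\mathcal{Q},\PP^k_{\R})$, and the Smith exact sequence (as in \cite{Viro}, p.~131) yields $b(\Zer(\mathcal{Q},\PP^k_{\R}))\le b(X_{\C})$, which is the claim.

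For a general real closed field $\R$ the same conclusion holds with the Betti numbers interpreted via semi-algebraic homology with $\Z_2$-coefficients (\cite[Chapter 6]{BPRbook2}): the Smith-theoretic dimension count is formal, using only the long exact sequences and finiteness properties that semi-algebraic homology over $\R$ enjoys, with ``compact'' replaced throughout by ``closed and bounded''. Alternatively, one may descend to the data generating $\mathcal{Q}$ over $\mathbb{Q}$ and transfer, using that in a semi-algebraic family the $\Z_2$-Betti numbers of the fibres are constant along the pieces of a suitable semi-algebraic stratification of the parameter space (so that the inequality over $\R$ is equivalent to an instance over $\mathbb{R}$ obtained from a point of the corresponding piece). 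I expect the only genuinely delicate point to be this passage to an arbitrary real closed field; over $\mathbb{R}$ it is classical Smith theory, so the work is to ensure the Smith exact sequence and the attendant bound are available in the semi-algebraic category over $\R$ — which I would cite from \cite{BPRbook2} rather than reprove.
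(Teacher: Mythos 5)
Your proposal is correct and takes the same approach as the paper, which states the theorem as an immediate application of the abstract Smith inequality (cited from Viro) to complex conjugation acting on $\Zer(\mathcal{Q},\PP^k_\C)$ with fixed-point set $\Zer(\mathcal{Q},\PP^k_\R)$. The paper gives no further detail; your write-up fills in the identification of the fixed-point set and the transfer to an arbitrary real closed field, both of which are consistent with what the paper implicitly assumes.
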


\begin{remark}
Note that we are going to use Theorem \ref{thm:smith} only for bounding the
number of semi-algebraically connected components (that is
the zero-th Betti number) of certain real varieties. Nevertheless, to
apply the inequality we need a bound on the sum of all the Betti numbers
(not just $b_0$) on the right hand side.
\end{remark}

The following theorem 
used in the proof of Theorem \ref{thm:main} is a direct consequence
of Theorem \ref{thm:smith} and the bound in Corollary
\ref{cor:bijkm}.

\begin{theorem}
\label{thm:main2}
Let $\R$ be a real closed field and
$\mathcal{P} = \{P_1,\ldots,P_m\}
\subset \R[X_1,\ldots,X_k]$ with $\deg(P_i) = d_i, i=1,\ldots,m$,
and
$1\leq
d_1 \leq d_2 \leq \cdots \leq d_m$. 
Let $\mathcal{P}^h = \{P_1^h,\ldots,P_m^h\}$, and
suppose 
that 
$P_1^h,\ldots,P_m^h$ define a non-singular complete intersection in
$\PP_\C^k$. Then,
$$
\displaylines{
b_0(\ZZ(\mathcal{P}^h,\PP_\R^k)) \leq
 \textstyle\binom{k+1}{m+1}
d_1 \cdots d_{m-1} d_m^{k-m+1}+2(k-m+1).
}
$$
In case $\ZZ(\mathcal{P},\R^k)$ is bounded,
$$
\displaylines{
b_0(\ZZ(\mathcal{P},\R^k)) \leq
\textstyle\binom{k+1}{m+1}d_1 \cdots d_{m-1} d_m^{k-m+1}+2(k-m+1).
}
$$
\end{theorem}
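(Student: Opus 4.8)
The plan is to derive Theorem \ref{thm:main2} directly from the Smith inequality (Theorem \ref{thm:smith}) together with the bound on the sum of Betti numbers of a non-singular complete intersection in complex projective space (Corollary \ref{cor:bijkm}). The two displayed inequalities are genuinely of the same nature: the first is about the homogenizations, the second is about the affine variety, and the affine case is obtained from the projective case by a standard compactness argument.

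First I would prove the projective statement. By hypothesis $P_1^h,\ldots,P_m^h$ define a non-singular complete intersection $X_\C = \Zer(\mathcal{P}^h,\PP_\C^k)$ of codimension $m$ in $\PP_\C^k$, and the forms $P_i^h$ have real coefficients. Hence Theorem \ref{thm:smith} applies (with $k$ in place of $k+1$, i.e.\ $k+1$ homogeneous coordinates), giving
$$
b_0(\ZZ(\mathcal{P}^h,\PP_\R^k)) \leq b(\ZZ(\mathcal{P}^h,\PP_\R^k)) \leq b(\ZZ(\mathcal{P}^h,\PP_\C^k)) = \beta^k_m(d_1,\ldots,d_m),
$$
the first inequality because $b_0$ is just one term in the sum $b = \sum_{i\ge 0} b_i$. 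Now Corollary \ref{cor:bijkm} bounds $\beta^k_m(d_1,\ldots,d_m)$ by $\binom{k+1}{m+1} d_1\cdots d_{m-1} d_m^{k-m+1} + 2(k-m+1)$, using the hypothesis $1 \leq d_1 \leq \cdots \leq d_m$, and this yields the first displayed bound.

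Second, I would deduce the affine bound when $\ZZ(\mathcal{P},\R^k)$ is bounded. The point is that a bounded real algebraic set embeds into its projective closure as a closed (indeed clopen, since it is bounded and hence away from the hyperplane at infinity) subset, so that $b_0(\ZZ(\mathcal{P},\R^k))$ is at most $b_0$ of the projective variety $\ZZ(\mathcal{P}^h,\PP_\R^k)$ — every semi-algebraically connected component of the affine set is contained in some semi-algebraically connected component of the projective one, because the affine chart $X_{k+1}\neq 0$ is an open subset of $\PP^k$. Combining this with the projective bound already established gives the claimed inequality. (If one prefers to avoid invoking homogenization subtleties, one can instead note directly that $\ZZ(\mathcal{P},\R^k) = \ZZ(\mathcal{P}^h,\PP_\R^k) \cap \{X_{k+1}\neq 0\}$ and, being bounded, is a union of connected components of $\ZZ(\mathcal{P}^h,\PP_\R^k)$.)

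The main obstacle — really the only subtle point — is the passage from the projective to the affine statement: one must be careful that $\ZZ(\mathcal{P}^h, \PP_\C^k)$ being a non-singular complete intersection is a statement only about the homogenizations, and does not by itself guarantee anything about the behavior of $\ZZ(\mathcal{P},\R^k)$ at infinity; it is precisely the boundedness hypothesis that lets us conclude that $\ZZ(\mathcal{P},\R^k)$ sits inside the affine chart as a clopen piece, so no connected components are created or destroyed in passing to the projective closure. Everything else is a direct chaining of Theorem \ref{thm:smith} and Corollary \ref{cor:bijkm}, with the trivial observation $b_0 \le b = \sum_i b_i$.
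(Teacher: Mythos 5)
Your proof is correct and follows exactly the route the paper intends (the paper simply says the result is immediate from Theorem~\ref{thm:smith} and Corollary~\ref{cor:bijkm}). One remark: the initial phrasing of the affine step, that each affine component is contained in some projective component, would not by itself rule out two affine components merging into one projective component; your parenthetical observation that the bounded affine set $\ZZ(\mathcal{P},\R^k)$ is clopen in $\ZZ(\mathcal{P}^h,\PP_\R^k)$ and hence a union of its connected components is what actually makes the deduction valid.
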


\begin{proof}
Proof is immediate from Theorem \ref{thm:smith} and Corollary \ref{cor:bijkm}.
\end{proof}

\begin{remark}
Note that the bound in Theorem \ref{thm:main2} is not true
if we omit the
assumption of being a non-singular complete intersection. A
counter-example is provided by Example \ref{eg:counterexample}.
\end{remark}

\begin{remark}
Another possible approach to the proof of Theorem \ref{thm:main2}
is to use the  critical point method and bound directly the number of 
critical points of a generic projection 
using the multi-homogeneous B\'ezout theorem (see for example
\cite{Safey}).
\end{remark}

\subsection{Generic coordinates}
\label{subsec:generic}
Unless otherwise stated, for any real closed field $\R$, 
we are going to use the Euclidean topology 
(see, for example, \cite[page 26] {BCR}) on $\R^k$. 
Sometimes we will need to use (the coarser) Zariski topology, and 
we explicitly state this whenever it is the case.

\begin{notation}
For a real algebraic set $V=\Zer(Q,\R^k)$
we let $\textnormal{reg } V$ denote the non-singular points
in dimension $\dim V$ of $V$ 
(\cite[Definition 3.3.9]{BCR}).
\end{notation}

\begin{definition}
Let $V=\Zer(Q,\R^k)$ be a real algebraic set.
Define $V^{(k)}=V$, and for $0\leq i \leq k-1$ define
$$V^{(i)}=V^{(i+1)}\setminus \textnormal{reg } V^{(i+1)}.$$
Set $\dim V^{(i)}=d(i)$.
\end{definition}

\begin{remark}{\label{rem:Vi}}
Note that $V^{(i)}$ is
Zariski closed for each $0\leq i \leq k$. 
\end{remark}

\begin{notation}
We denote by $\textnormal{Gr}_\R(k,j)$ the real Grassmannian of $j$-dimensional
linear subspaces of  $\R^k$.
\end{notation}

\begin{notation}
For a real algebraic variety $V \subset \R^k$, and
$x \in \textnormal{reg } V $ where $\dim \textnormal{reg } V = p$, we denote by $T_x V$
the tangent space at $x$ to $V$ (translated to the origin). Note
that $T_x V$ is a $p$-dimensional subspace of $\R^k$, and hence
an element
of $\textnormal{Gr}_\R(k,p)$.
\end{notation}

\begin{definition}{\label{def:good}}
Let $V=\Zer(Q,\R^k)$ be a real algebraic set,
$1\leq j \leq k$, and
$\ell\in \textnormal{Gr}(k,k-j)$.  We say the linear space $\ell$ is
\textit{$j$-good} with respect to $V$ if either:
\blist
\item $j\notin d([0,k])$, or
\item $d(i)=j$,
and
$$
A_\ell:=\{x\in \textnormal{reg } V^{(i)}|\
\dim(\T_x V^{(i)} \cap \ell) =0\}
$$
is a non-empty dense Zariski open subset of $\textnormal{reg } V^{(i)}$.
\elist
\end{definition}

\begin{remark}{\label{rem:Aell}}
Note that the semi-algebraic subset $A_\ell$ is always a
(possibly empty) Zariski open subset of $\textnormal{reg } V^{(i)}$,
hence of $V^{(i)}$.  In the case where $V^{(i)}$ is an
irreducible Zariski closed subset (see Remark \ref{rem:Vi}),
the set $A_\ell$ is either empty or a non-empty dense
Zariski open subset of $\textnormal{reg } V^{(i)}$.
\end{remark}

\begin{definition}
Let $V=\Zer(Q,\R^k)$ be a real algebraic set and
$\mathcal{B}=\{v_1,\dots,v_k\}\subset \R^k$
a basis of $\R^k$.  We say that the basis
$\mathcal{B}$ is \textit{good} with respect to $V$
if for each $j$, $1\leq j \leq k$,
the linear space
$\textnormal{span}\{v_1,\dots,v_{k-j}\}$ is $j$-good.
\end{definition}

\begin{proposition}
{\label{prop:generic}}
Let $V=\Zer(Q,\R^k)$ be a real algebraic set and
$\{v_1,\dots,v_k\}\subset \R^k$ a basis of $\R^k$.
Then, there exists a non-empty open 
semi-algebraic subset
of linear transformations
$\mathcal{O}\subset \textnormal{GL}(k,\R)$ such that for every $T\in \mathcal{O}$ the basis
$\{T(v_1),\dots,T(v_k)\}$ is good with respect to $V$.
\end{proposition}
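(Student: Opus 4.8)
The plan is to prove Proposition \ref{prop:generic} by showing that for each $j$ the set of linear transformations making $\mathrm{span}\{T(v_1),\dots,T(v_{k-j})\}$ a $j$-good subspace is a non-empty Zariski open (hence semi-algebraically open and dense) subset of $\mathrm{GL}(k,\R)$, and then intersecting over the finitely many values $j\in\{1,\dots,k\}$. Since a finite intersection of non-empty Zariski open subsets of the irreducible variety $\mathrm{GL}(k,\R)$ is again non-empty and Zariski open, this yields the desired $\mathcal{O}$.

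Fix $j$. If $j\notin d([0,k])$ there is nothing to check, so assume $d(i)=j$ for the relevant index $i$, and write $W=\mathrm{reg}\,V^{(i)}$, a non-singular real algebraic set of pure dimension $j$ which is Zariski open in the Zariski closed set $V^{(i)}$. First I would reduce to the irreducible case: decompose $V^{(i)}$ into its finitely many irreducible components of dimension $j$ (components of smaller dimension do not meet $W$), and observe (as in Remark \ref{rem:Aell}) that for an irreducible component $C$ the set $A_\ell\cap\mathrm{reg}\,C$ is either empty or dense Zariski open in $\mathrm{reg}\,C$; so it suffices to find, for each such $C$, a dense Zariski open set of $T$'s for which $A_\ell\cap\mathrm{reg}\,C$ is non-empty, and take the finite intersection over components. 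The key step is then a dimension count on the incidence variety. Consider
$$
\mathcal{I}=\{(x,T)\in \mathrm{reg}\,C\times \mathrm{GL}(k,\R)\mid \dim(T_x C\cap \mathrm{span}\{T(v_1),\dots,T(v_{k-j})\})\geq 1\}.
$$
For fixed $x$, the fibre over $x$ is the set of $T$ carrying the fixed $(k-j)$-dimensional space $\mathrm{span}\{v_1,\dots,v_{k-j}\}$ to a $(k-j)$-plane meeting the fixed $j$-plane $T_xC$ non-trivially; by the standard transversality/general-position fact (a generic $(k-j)$-plane meets a fixed $j$-plane in $\R^k$ in $\{0\}$, as $j+(k-j)=k$), this fibre is a proper Zariski closed subset of $\mathrm{GL}(k,\R)$, of codimension exactly $1$. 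Hence $\mathcal{I}$ has dimension $\dim\mathrm{GL}(k,\R)+\dim C-1$, so its image under the projection to $\mathrm{GL}(k,\R)$ has dimension at most $\dim\mathrm{GL}(k,\R)-1$; its complement therefore contains a dense Zariski open set of $T$'s, and for every such $T$ there is some $x\in\mathrm{reg}\,C$ with $\dim(T_xC\cap\mathrm{span}\{T(v_1),\dots,T(v_{k-j})\})=0$, i.e. $A_\ell\neq\emptyset$, which by irreducibility of $C$ forces $A_\ell\cap\mathrm{reg}\,C$ to be dense Zariski open. I would state the constructibility of $\mathcal{I}$ (it is cut out by the vanishing of all $((k-j)+1)\times((k-j)+1)$ minors of the matrix whose columns span $T_xC$ together with $T(v_1),\dots,T(v_{k-j})$, these being algebraic in $(x,T)$ as $x$ ranges over an affine chart of $\mathrm{reg}\,C$) and invoke the theorem on dimension of fibres of a morphism, together with Chevalley's theorem that the image of a constructible set is constructible, to make the "image has dimension $<\dim\mathrm{GL}$" step rigorous; alternatively one can work directly with real algebraic sets and semi-algebraic maps using \cite{BCR}.

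Finally, intersecting the resulting dense Zariski open subsets $\mathcal{O}_j\subset\mathrm{GL}(k,\R)$ over all $j\in\{1,\dots,k\}$ with $j\in d([0,k])$ gives a non-empty Zariski open $\mathcal{O}$; every $T\in\mathcal{O}$ produces a good basis $\{T(v_1),\dots,T(v_k)\}$ by definition. Since non-empty Zariski open subsets of $\R^k$ (and of $\mathrm{GL}(k,\R)$, viewed inside $\R^{k^2}$) are in particular non-empty Euclidean-open semi-algebraic subsets, $\mathcal{O}$ has the stated form. The main obstacle I anticipate is purely bookkeeping: being careful that $\mathrm{reg}\,V^{(i)}$ need not be irreducible or even connected, so that one must pass to irreducible components of $V^{(i)}$ of the correct dimension and handle each separately before intersecting; and making the "generic $(k-j)$-plane is transverse to a fixed $j$-plane" fibre computation precise over an arbitrary real closed field $\R$ rather than $\mathbb{R}$, which is where the semi-algebraic machinery of \cite{BCR} (dimension theory, Chevalley-type statements) does the work in place of classical algebraic geometry.
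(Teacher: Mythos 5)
Your approach is genuinely different from the paper's. The paper proves the statement by contradiction: it supposes an open set $U\subset \textnormal{Gr}_\R(k,k-j)$ of bad subspaces, passes to an irreducible component $V^{(i)}_r$ for which some sub-open set of $U$ is uniformly bad, produces a point $z\in\textnormal{reg }V^{(i)}_r$ with $T_z(\textnormal{reg }V^{(i)})\in\bigcap_{\ell\in U}\Omega(\ell)$, and derives a contradiction from Lemma \ref{lem:empty}, whose proof in turn rests on the explicit Chistov-style construction of a small universal family $L_{\eps,k,j}$ of transversal $(k-j)$-planes. Your route is the incidence-variety/dimension-count argument, which avoids Lemma \ref{lem:empty} entirely. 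The reduction to irreducible components and the use of Remark \ref{rem:Aell} match the paper's bookkeeping, so the two proofs agree on structure but diverge on the key step.

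However, there is a real gap in the dimension count. You compute $\dim\mathcal{I}=\dim\textnormal{GL}(k,\R)+\dim C-1$ correctly, but then assert that the image of $\mathcal{I}$ under the projection $\pi:\mathcal{I}\to\textnormal{GL}(k,\R)$ has dimension at most $\dim\textnormal{GL}(k,\R)-1$. This does not follow: the image of a set of dimension $N$ under a morphism to $\textnormal{GL}(k,\R)$ can have dimension up to $\min(N,\dim\textnormal{GL}(k,\R))$, and since $\dim C=j\geq 1$, you have $N=\dim\textnormal{GL}(k,\R)+j-1\geq\dim\textnormal{GL}(k,\R)$, so the bound you claim is vacuous. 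In fact the image of $\pi$ will typically be \emph{all} of $\textnormal{GL}(k,\R)$ (for a plane curve, every direction is tangent to the curve somewhere), and what you actually want is not that the image be small but that the set $\mathcal{B}$ of $T$ for which the fibre $\pi^{-1}(T)$ is \emph{all} of $\textnormal{reg }C$ be small. That set does satisfy $\mathcal{B}\times\textnormal{reg }C\subset\mathcal{I}$, so $\dim\mathcal{B}+\dim C\leq\dim\mathcal{I}$, giving $\dim\mathcal{B}\leq\dim\textnormal{GL}(k,\R)-1$ as required; and by Remark \ref{rem:Aell}, for $T\notin\mathcal{B}$ the open set $A_{\ell_T}\cap\textnormal{reg }C$ is non-empty and hence dense. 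So the intended conclusion is salvageable, but as written the step ``image has dimension $\leq\dim\textnormal{GL}(k,\R)-1$, hence its complement contains a dense Zariski open set'' is wrong, and the argument needs to be reworked to estimate $\dim\mathcal{B}$ rather than $\dim\pi(\mathcal{I})$. You would also then need to check that $\mathcal{B}$ is semi-algebraic (or constructible), so that dimension $<\dim\textnormal{GL}(k,\R)$ forces empty interior.
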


The proof of Proposition \ref{prop:generic} uses the following
notation and lemma.

\begin{notation}
For any $\ell\in \textnormal{Gr}_\R(k,k-j)$, $1\leq j \leq k$, we denote
by $\Omega(\ell)$
the real algebraic subvariety of $\textnormal{Gr}_\R(k,j)$ defined by
$$\Omega(\ell)=\{\ell'\in \textnormal{Gr}_\R(k,j)| \ \ell \cap \ell'\neq 0\}.$$
\end{notation}

\begin{lemma}
\label{lem:empty}
For any non-empty open 
semi-algebraic subset
$U\subset \textnormal{Gr}_\R(k,k-j)$, $1\leq j \leq k$,
we have
$$\bigcap_{\ell\in U} \Omega (\ell) = \emptyset.$$
\end{lemma}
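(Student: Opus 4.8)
The plan is to argue by contradiction: suppose there is an element $\ell' \in \bigcap_{\ell \in U} \Omega(\ell)$, i.e.\ a fixed $j$-dimensional subspace $\ell' \subset \R^k$ that meets \emph{every} $(k-j)$-dimensional subspace $\ell$ in the open set $U$ nontrivially. The key observation is that a generic $(k-j)$-dimensional subspace of $\R^k$ is disjoint from a \emph{fixed} $j$-dimensional subspace, since $j + (k-j) = k$ and transversality is the generic behaviour. More precisely, $\Omega(\ell')^\vee := \{\ell \in \textnormal{Gr}_\R(k,k-j) \mid \ell \cap \ell' \neq 0\}$ is a proper Zariski closed subset of $\textnormal{Gr}_\R(k,k-j)$: it is Zariski closed because the condition $\dim(\ell \cap \ell') \geq 1$ is cut out by the vanishing of appropriate maximal minors (a Schubert-type condition), and it is proper because one can exhibit at least one $(k-j)$-plane disjoint from $\ell'$ — e.g.\ if $\ell'$ is spanned by $e_{i_1},\dots,e_{i_j}$ after a change of basis, take the coordinate subspace spanned by the complementary standard basis vectors.

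Now I would combine this with the hypothesis that $U$ is a non-empty \emph{open semi-algebraic} (in particular Euclidean-open) subset of the Grassmannian. The assumed element $\ell'$ would force $U \subset \Omega(\ell')^\vee$, a proper Zariski closed subset. But a proper Zariski closed subset of the (irreducible) real Grassmannian $\textnormal{Gr}_\R(k,k-j)$ has empty Euclidean interior — it is a lower-dimensional semi-algebraic set — so it cannot contain a non-empty Euclidean-open set $U$. This contradiction shows no such $\ell'$ exists, i.e.\ $\bigcap_{\ell \in U}\Omega(\ell) = \emptyset$, which is the claim.

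The main technical point to get right is the assertion that $\Omega(\ell')^\vee$ is Zariski closed and proper in $\textnormal{Gr}_\R(k,k-j)$, and that the Grassmannian over a general real closed field $\R$ is irreducible so that proper Zariski closed subsets have empty interior; this is where I would either cite the relevant facts about Grassmannians and Schubert varieties (for instance the incidence variety description) or give the explicit minor-vanishing description on a standard affine chart together with one explicit disjoint $(k-j)$-plane to witness properness. The non-emptiness of $U$ is used precisely once — to derive the contradiction with empty interior — and the openness of $U$ is the other essential hypothesis. I expect the Grassmannian geometry (closedness/properness/irreducibility over an arbitrary real closed field) to be the only real obstacle; everything else is a routine dimension count.
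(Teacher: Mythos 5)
Your proof is correct, but it takes a genuinely different route from the one in the paper. You argue by contradiction: if some $\ell'\in\textnormal{Gr}_\R(k,j)$ lay in $\bigcap_{\ell\in U}\Omega(\ell)$, then $U$ would be contained in the Schubert-type variety $\{\ell \in \textnormal{Gr}_\R(k,k-j) : \ell\cap\ell'\neq 0\}$, which is a proper Zariski-closed subset and hence (by irreducibility and smoothness of the Grassmannian) of strictly lower dimension, so it cannot contain a non-empty Euclidean-open set. The paper instead uses an explicit construction going back to Chistov et al. (via \cite[Proposition 13.27]{BPRbook2}): taking Vandermonde-type vectors $v_k(x)=\sum_i x^i e_i$, one exhibits a finite family $L_{\eps,k,j}=\{\ell_{m\eps}:0\le m\le k(k-j)\}$ of $(k-j)$-planes such that every $j$-plane is transversal to at least one of them, then rotates coordinates and shrinks $\eps$ to force this finite family into $U$. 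The paper's argument is elementary and constructive and sidesteps any need to discuss irreducibility or dimension theory of the Grassmannian over a general real closed field; your argument is shorter and more conceptual but puts the burden on the facts you flag at the end (Zariski-closedness and properness of the Schubert condition, and irreducibility of $\textnormal{Gr}_\R(k,k-j)$ so that proper Zariski-closed subsets have empty interior). Those facts are all true — $\Omega(\ell')^\vee$ is cut out by maximal minors and has codimension one, a complementary coordinate plane witnesses properness, and the Grassmannian is a smooth irreducible real variety, so a proper Zariski-closed subset drops dimension and therefore has empty interior — but you would want to either cite them or give the minor description and a line on why lower dimension forces empty interior. Both proofs are valid; yours is the shorter "abstract nonsense" version, the paper's avoids invoking any structural facts about Grassmannians beyond what is written down explicitly.
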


\begin{proof}
We use a technique due to Chistov et al. (originally appearing in \cite{K})
who explicitly constructed a finite family of elements in
$\textnormal{Gr}_\R(k,k-j)$ such that every $\ell' \in \textnormal{Gr}_\R(k,j)$ is transversal
to at least one member of this family.
More precisely, let
$e_0,\ldots,e_{k-1}$ be the standard basis vectors in $\R^k$, and
let for any $x\in \R$,
$$
\displaylines{
v_k(x) = \sum_{i=0}^{k-1}{x^i} e_i.
}
$$

Then the set of vectors $v_{k}(x),v_k(x+1),\ldots,v_k(x+k-j-1)$ are
linearly independent and span a $(k-j)$-dimensional subspace of $\R^k$.
Denote by $\ell_x$ the corresponding element in $\textnormal{Gr}_\R(k,k-j)$. An easy
adaptation of the proof of Proposition 13.27 \cite{BPRbook2}
now shows that, for $\eps>0$, the set
\[
L_{\eps,k,j} := \{\ell_{m\eps} | 0 \leq m \leq k(k-j) \} \subset \textnormal{Gr}(k,k-j),
\]
has the property that for any $\ell' \in \textnormal{Gr}_\R(k,j)$, there exists
some $m$, $0 \leq m\leq k(k-j)$, such that $\ell' \cap \ell_{m \eps} = 0$.
In other words, for every $\eps > 0$,
$$
\displaylines{
\bigcap_{0 \leq m \leq k(k-j)} \Omega(\ell_{m\eps}) = \emptyset.
}
$$
By rotating co-ordinates we can assume that $\ell_0 \in U$, and
then by choosing $\eps$ small enough we can assume that
$L_{\eps,k,j} \subset U$. This finishes the proof.
\end{proof}

\begin{proof}[Proof of Proposition \ref{prop:generic}]
We prove that for each $j,\; 0 \leq j \leq k$,
the set of $\ell \in \textnormal{Gr}_\R(k,k-j)$ such that $\ell$ is not
$j$-good for $V$ is a semi-algebraic
subset of $\textnormal{Gr}_\R(k,k-j)$ without interior.
It then follows that its complement contains an 
open
dense
semi-algebraic
subset of $\textnormal{Gr}_\R(k,k-j)$, and hence there is an open 
semi-algebraic
subset
$\mathcal{O}_j \subset \textnormal{GL}_n(\R)$ such that for each $T \in \mathcal{O}_j$,
the linear space
$\textnormal{span}\{T(v_1),\dots,T(v_{k-j})\}$ is $j$-good with respect to $V$.

Let $j=d(i)$, $0\leq i \leq k$. Seeking a contradiction,
suppose that there is an open 
semi-algebraic
subset
$U\subset \textnormal{Gr}_\R(k,k-j)$ such that every $\ell\in U$ is not $j$-good
with respect to $V$.
Let
$V^{(i)}_1,\dots,V^{(i)}_n$
be the
distinct
irreducible components of the Zariski closed set $V^{(i)}$.
For each $\ell \in U$,
$\ell$ is not $j$-good for some
$V^{(i)}_r$,
$1\leq r\leq n$ (otherwise $\ell$ would be $j$-good
for $V
$).
Let $U_1,\dots,U_n$ denote
the semi-algebraic sets defined by
$$U_r:=\{\ell\in U| \ \ell \text{ is not } j\text{-good for } V^{(i)}_r\}.$$
We have $U=U_1\cup \dots \cup U_n$, and $U$ is open in $\textnormal{Gr}_\R(k,k-j)$.
Hence,
for some $r$, $1\leq r \leq n$, we have $U_r$
contains an non-empty open 
semi-algebraic
subset.
Replacing $U$ by this (possibly smaller) subset we have that
the set $A_\ell\cap \textnormal{reg } V^{(i)}_r$ is empty
for each $\ell \in U$ (cf. Definition \ref{def:good}, Remark \ref{rem:Aell}).
So,
$\textnormal{reg }V^{(i)}_r\subset \textnormal{reg } V^{(i)}\setminus A_\ell$
for every
$\ell \in U$,
and $$
\emptyset \neq \textnormal{reg } V^{(i)}_r\subset \bigcap_{\ell\in U} \textnormal{reg } V^{(i)}\setminus A_\ell.$$
Let
$\displaystyle{z\in \bigcap_{\ell\in U} \textnormal{reg } V^{(i)}\setminus A_\ell}$, but then
the linear space $\ell' = T_z (\textnormal{reg } V^{(i)})$
is in
$\displaystyle{\bigcap_{\ell \in U} \Omega(\ell)}$,
contradicting Lemma \ref{lem:empty}.
\end{proof}

\subsection{Non-singularity of the set critical of points of
hypersurfaces for generic projections}
\label{subsec:polar}
\begin{notation}
Let $H \in \R[X_1,\ldots,X_k]$. For $0 \leq p \leq k$, we will denote by
$\Cr_p(H)$ the set of polynomials
\[
\{
H, \frac{\partial H}{\partial X_1},\ldots, \frac{\partial H}{\partial X_p}
\}.
\]
We will denote by $\Cr_p^h(H)$ the corresponding set
\[
\{
H^h, \frac{\partial H^h}{\partial X_1},\ldots,
\frac{\partial H^h}{\partial X_p}
\}
\]
of homogenized polynomials.
\end{notation}

\begin{notation}
Let $d$ be even. We will denote by
$\Pos_{\R,d,k} \subset \R[X_1,\ldots,X_k]$
the set of non-negative polynomials in $\R[X_1,\ldots,X_k]$
of degree at most $d$.
Denoting by $\R[X_1,\ldots,X_k]_{\leq d}$ 
the finite dimensional
vector subspace of $\R[X_1,\ldots,X_k]$ consisting of polynomials of degree
at most $d$, we have that $\Pos_{\R,d,k}$ is a (semi-algebraic) cone in
$\R[X_1,\ldots,X_k]_{\leq d}$ with non-empty interior.
\end{notation}

\begin{proposition}
\label{prop:generic_polar}
Let $\R$ be a real closed field and $\C$ the algebraic closure of $\R$.
Let $d>0$ be even. Then there exists $H \in \Pos_{\R,d,k}$, such that
for each $p,\; 0 \leq p \leq k$, $\Cr_p^h(H)$ defines a non-singular complete
intersection in $\PP^k_\C$.
\end{proposition}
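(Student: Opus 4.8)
The plan is to exhibit the desired $H$ by a dimension‑counting argument on the (finite‑dimensional) vector space $\R[X_1,\ldots,X_k]_{\le d}$, showing that the bad locus—polynomials $H$ for which some $\Cr_p^h(H)$ fails to be a non‑singular complete intersection—is contained in a proper Zariski closed subset, and then intersecting its complement with the open cone $\Pos_{\R,d,k}$ (which has non‑empty interior). First I would recall the classical fact that a tuple of homogeneous polynomials $F_0,\ldots,F_p$ in $\PP^k_\C$ defines a non‑singular complete intersection if and only if (a) the codimension of $\Zer(\{F_0,\ldots,F_p\},\PP^k_\C)$ is exactly $p+1$ and (b) at every point of this common zero set the Jacobian matrix of the $F_i$ has maximal rank $p+1$; both conditions are Zariski‑open in the coefficients of the $F_i$, so it suffices to produce, for each $p$ separately, a single $H$ making $\Cr_p^h(H)$ a non‑singular complete intersection—then a finite intersection of non‑empty Zariski opens is again a non‑empty Zariski open, and we are done once we check it meets $\Pos_{\R,d,k}$.

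Next I would construct an explicit witness. The natural candidate is a small perturbation of a ``separated squares'' polynomial: take $H_0 = \sum_{i=1}^k \prod_{j=1}^{d/2}(X_i-j)^2$ or, even simpler for the projective non‑singularity analysis, a generic member of the linear system of degree‑$d$ forms; one then computes that for generic coefficients the forms $H^h, \partial H^h/\partial X_1,\ldots,\partial H^h/\partial X_p$ have no common zero in $\PP^k_\C$ in excess dimension and meet transversally. The cleanest route is to invoke a Bertini/transversality theorem: the partial derivatives $\partial H/\partial X_1,\ldots,\partial H/\partial X_k$ of a generic $H$ of degree $d$ cut out, together with $H$ itself, a non‑singular complete intersection, because the map sending $H$ to the tuple of its homogenized partials is dominant enough onto the relevant space of tuples that the generic fibre inherits genericity. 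I would phrase this as: the conditions defining ``$\Cr_p^h(H)$ is a non‑singular complete intersection'' are each given by the non‑vanishing of certain resultant/discriminant‑type polynomials in the coefficients of $H$, and these resultants are not identically zero precisely because the explicit witness above satisfies them.

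Finally I would descend from $\C$ to $\R$ and into the cone. Since $\R$ is a real closed field, a non‑empty Zariski open subset of $\R[X_1,\ldots,X_k]_{\le d}$ (defined over $\R$) has non‑empty interior in the Euclidean topology; and $\Pos_{\R,d,k}$ is a cone with non‑empty interior in the same space. Two Euclidean‑open subsets of $\R^N$, one of which (the good locus) is moreover dense, must intersect, so there is $H \in \Pos_{\R,d,k}$ in the good locus, i.e.\ with $\Cr_p^h(H)$ a non‑singular complete intersection in $\PP^k_\C$ for every $p$, $0 \le p \le k$.

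The main obstacle I expect is the construction of the explicit witness and the verification that the relevant resultants do not vanish identically—equivalently, proving that for \emph{some} polynomial $H$ (not just heuristically a generic one) all the sets $\Cr_p^h(H)$, $0 \le p \le k$, are simultaneously non‑singular complete intersections. One must be careful at the hyperplane at infinity, where homogenization can introduce spurious singularities, and one must control all $p$ at once; a perturbation argument starting from a carefully chosen $H_0$ (so that the affine picture is already transverse) and then checking the behaviour at infinity is the delicate part, while the passage from ``generic over $\C$'' to ``an actual $H$ in the real cone'' is routine given Proposition‑style density facts already in use in the paper.
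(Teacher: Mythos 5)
Your overall strategy matches the paper's: observe that the ``bad'' locus (those $H$ for which some $\Cr_p^h(H)$ fails to be a non-singular complete intersection) is Zariski closed in $\R[X_1,\ldots,X_k]_{\le d}$, so its complement, if non-empty, is Zariski open and hence Euclidean dense, and since $\Pos_{\R,d,k}$ has non-empty Euclidean interior the two must intersect. That last step is fine and is exactly what the paper does.

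The genuine gap, which you yourself flag, is in establishing that the good locus is non-empty, i.e.\ that there exists even one degree-$d$ form $H$ for which \emph{every} $\Cr_p^h(H)$, $0\le p\le k$, is simultaneously a non-singular complete intersection in $\PP^k_\C$. Your proposed witness $H_0=\sum_{i=1}^k\prod_{j=1}^{d/2}(X_i-j)^2$ does not help: at every point of $\ZZ(H_0,\R^k)$ all partial derivatives of $H_0$ vanish (each summand is a square of a function vanishing there), so $\ZZ(H_0^h,\PP_\C^k)$ is singular along its entire real locus, making it a poor starting point for a perturbation argument, and you do not carry out the perturbation. The appeal to ``Bertini/transversality'' is precisely the non-trivial content that must be proved; the classical Bertini theorem only gives non-singularity of the generic hypersurface $\ZZ(H^h,\PP^k_\C)$, not that $H^h$ together with $p$ of its partials cuts out a non-singular complete intersection for all $p$ at once. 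The paper closes this gap by citing a theorem of Bank, Giusti, Heintz, and Mbakop (the reference \cite{Bank97}, Proposition~3) asserting that the generic polar varieties of a non-singular complex hypersurface are non-singular complete intersections; the sets $\Cr_p^h(H)$ are exactly the polar varieties of $\ZZ(H^h,\PP^k_\C)$ with respect to the coordinate flag, and genericity of $H$ simultaneously ensures non-singularity of the hypersurface and genericity of the flag. Without that (or an equivalent) input, your argument does not establish non-emptiness of the good locus, which is the heart of the statement.
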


\begin{proof}[Proof]
The proposition follows from the fact that the generic polar varieties of
non-singular complex hypersurfaces are non-singular complete intersections
\cite[Proposition 3]{Bank97}, and since $\Pos_{\R,d,k}$
has non-empty interior, we can choose a generic polynomial in
$\Pos_{\R,d,k}$ having this property.
\end{proof}

\begin{remark}

The fact that generic
polar varieties of a non-singular complex variety
are non-singular complete intersections
is not true in general for higher codimension varieties,
see \cite{Bank97,Bank10}, in particular \cite[Section 3]{Bank10}.
\end{remark}

\subsection{Infinitesimals and Puiseux series}
\label{subsec:puiseux}
In our arguments we are going to use infinitesimals and
non-archimedean extensions of a given real closed field $\R$.  A
typical non-archimedean extension of $\R$ is the field $\R\la\eps\ra$
of algebraic Puiseux series with coefficients in $\R$ , which coincide
with the
germs of semi-algebraic continuous functions (see \cite{BPRbook2},
Chapter 2, Section 6 and Chapter 3, Section 3).  An element $x\in \R\la
\eps\ra$ is bounded over $\R$ if $\vert x \vert \le r$ for some $0\le
r \in \R$.  The subring $\R\la\eps\ra_b$ of elements of $\R\la\eps\ra$
bounded over $\R$ consists of the Puiseux series with non-negative
exponents.  We denote by $\lim_{\varepsilon}$ the ring homomorphism
from~$\R \langle \varepsilon \rangle_b$ to $\R$ which maps $\sum_{i
  \in \mathbb{N}} a_i \varepsilon^{i / q}$ to $a_0$. So, the mapping
$\lim_{\varepsilon}$ simply replaces $\varepsilon$ by $0$ in a bounded
Puiseux series.  Given $S\subset \R\la \eps \ra^k$, we denote by
$\lim_\eps(S)\subset \R^k$ the image by $\lim_\eps$ of the elements of
$S$ whose coordinates are bounded over $\R$.  
We denote by $\R\langle \ep_1,\ep_2,\ldots,\ep_\ell\rangle$ the real closed field $\R\langle \ep_1 \rangle \langle \ep_2 \rangle \cdots \langle \ep_\ell \rangle$, and we let $\lim_{\ep_i}$ denote the ring homomorphism $\lim_{\ep_i}\lim_{\ep_{i+1}}\cdots \lim_{\ep_{\ell}}$.  

More generally, let $\R'$ be a real closed field extension of $\R$.
If $S\subset \R^ k$ is a semi-algebraic set, defined by a boolean
formula $\Phi$ with coefficients in $\R$, we denote by $\Ext(S,\R')$
the extension of $S$ to $\R'$, i.e.\ the semi-algebraic subset of
$\R'^k$ defined by $\Phi$.  The first property of $\Ext(S,\R')$ is
that it is well defined, i.e.\ independent on the formula $\Phi$
describing $S$ (\cite{BPRbook2} Proposition 2.87).  Many properties of
$S$ can be transferred to $\Ext(S,\R')$: for example $S$ is non-empty
if and only if $\Ext(S,\R')$ is non-empty, $S$ is semi-algebraically
connected if and only if $\Ext(S,\R')$ is semi-algebraically connected
(\cite{BPRbook2} Proposition 5.24).

\section{Proof of the main theorem}
\label{sec:main}

\begin{remark}
Most of the techniques employed in the proof of the main theorem are similar to those found in \cite{BPRbook2}, see \cite[Section 13.1 and Section 13.3]{BPRbook2}.
\end{remark}

Throughout this section, $\R$ is a real closed field,
$\mathcal{Q},\mathcal{P}$ are finite subsets
of $\R[X_1,\ldots,X_k]$, with
$\deg P= d_P$
for all $P \in \mathcal{P}$, and
$\deg(Q) \leq d_0$ for all $Q \in \mathcal{Q}$.
We
denote by
$k'$ the real dimension of $\ZZ(\mathcal{Q},\R^k)$.
Let $Q = \sum_{F \in \mathcal{Q}} F^2$.

For $x \in \R^k$ and $r > 0$, we will denote by $B_k(0,r)$ the open ball
centred at $x$ of radius $r$. For any semi-algebraic subset $X \subset \R^k$,
we denote by $\overline{X}$ the closure of $X$ in $\R^k$. It follows from
the Tarski-Seidenberg transfer principle (see for example
\cite[Ch 2, Section 5]{BPRbook2}) that the closure
of a semi-algebraic set is again semi-algebraic.

We suppose 
using Proposition \ref{prop:generic} that
after making a
linear change in co-ordinates if necessary 
the given system of co-ordinates
is good
with respect to $\ZZ(Q,\R^k)$.

Using Proposition \ref{prop:generic_polar},
suppose that $H \in \Pos_{\R,2d_0,k}$ satisfies
\begin{property}
\label{prop:H}
for any $p, 0 \leq p \leq k$, $\Cr_p^h(H)$ defines a non-singular complete
intersection in $\PP^k_\C$.
\end{property}

Let $\Def(Q,H,\zeta)$ be defined by
\[
\Def(Q,H,\zeta) = (1 -\zeta)Q - \zeta H.
\]

We first prove several properties of the polynomial $\Def(Q,H,\zeta)$.

\begin{proposition}
\label{prop:limitofdef}
Let 
$\tilde{\R}$ be any real closed field containing $\R\langle 1/\Omega\rangle$, and let 
$C$ be a semi-algebraically connected component of
$$
\ZZ(Q,\tilde{\R}^k)  \cap B_k(0,\Omega).
$$
Then, there exists a semi-algebraically connected component,
$D \subset \tilde{\R}\langle \zeta\rangle^k$  of the semi-algebraic set
$$W = \{ x \in B_k(0,\Omega) \;\mid\; \Def(Q,H,\zeta)(x) \leq 0 \}$$
such that $\overline{C} = \lim_\zeta D$.
\end{proposition}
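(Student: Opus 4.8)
The plan is to interpret $\Def(Q,H,\zeta) = (1-\zeta)Q - \zeta H$ as an infinitesimal perturbation of the nonnegative polynomial $Q$: since $H\in\Pos_{\R,2d_0,k}$ and $\zeta>0$ is infinitesimal, on points where $Q$ is small, $\Def(Q,H,\zeta)$ is a tiny negative quantity, so the set $W$ is an infinitesimal ``tube'' around $\ZZ(Q,\tilde\R^k)$. First I would fix a semi-algebraically connected component $C$ of $\ZZ(Q,\tilde\R^k)\cap B_k(0,\Omega)$. The key observation is that $C$ is a bounded (by $\Omega$) semi-algebraically connected component of the real zero set of the nonnegative polynomial $Q$, hence also a semi-algebraically connected component of the set $\{Q\le 0\}\cap B_k(0,\Omega)$, which is a ``local minimum'' situation; this is exactly the setup treated in \cite[Section 13.1]{BPRbook2} (the replacement of a variety by a slightly smaller sublevel set). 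I would therefore invoke the standard infinitesimal-perturbation argument: for all sufficiently small $t>0$ in $\tilde\R$ (or rather for $\zeta$ infinitesimal over $\tilde\R$), the set $\{(1-\zeta)Q - \zeta H \le 0\}$ has, near $C$, a semi-algebraically connected component $D$ whose image under $\lim_\zeta$ is precisely $\overline C$.

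The steps, in order, would be: (1) Pass to $\Ext(C,\tilde\R\langle\zeta\rangle)$ and note, using transfer (\cite[Proposition 5.24]{BPRbook2}), that it is semi-algebraically connected and contained in $B_k(0,\Omega)$. (2) Show the extension $\Ext(\overline C,\tilde\R\langle\zeta\rangle)$ is contained in $W$: on $\overline C$ we have $Q=0$ hence $\Def(Q,H,\zeta) = -\zeta H \le 0$ since $H$ is nonnegative, so $\Ext(\overline{C},\tilde\R\langle\zeta\rangle)\subset W$. This gives a semi-algebraically connected subset of $W$ sitting over $C$, hence it is contained in a unique semi-algebraically connected component $D$ of $W$. (3) Show $D$ is bounded over $\tilde\R$ — it lies in $B_k(0,\Omega)$ by construction, so this is automatic. (4) Show $\lim_\zeta D = \overline C$. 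The inclusion $\overline C = \lim_\zeta \Ext(\overline C,\tilde\R\langle\zeta\rangle)\subseteq \lim_\zeta D$ is immediate from (2). For the reverse inclusion, one argues that $\lim_\zeta D$ is a semi-algebraically connected subset of $\{Q\le 0\}\cap \overline{B_k(0,\Omega)} = \ZZ(Q,\tilde\R^k)\cap\overline{B_k(0,\Omega)}$ (since $Q\ge 0$): indeed if $x=\lim_\zeta y$ for $y\in D$, then $0\ge \lim_\zeta \Def(Q,H,\zeta)(y) = Q(x)$, forcing $Q(x)=0$; and $\lim_\zeta$ preserves semi-algebraic connectedness of a semi-algebraically connected set (\cite[Chapter 12]{BPRbook2}, the standard fact that $\lim_\eps$ of a semi-algebraically connected bounded set is semi-algebraically connected). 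Since $\lim_\zeta D$ is connected, meets $C$, and is contained in $\ZZ(Q,\tilde\R^k)$, it must lie in $\overline C$ (the connected component); combined with $\overline C\subseteq\lim_\zeta D$ we get equality.

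The main obstacle, and the step requiring the most care, is (4): establishing that $\lim_\zeta D$ is contained in $\overline C$ and not spilling over into a neighbouring component of $\ZZ(Q,\tilde\R^k)$. The delicate point is that a priori $D$ could be ``large'' — the tube around $\ZZ(Q,\tilde\R^k)$ could connect up two different components of $C$ once we thicken, if those components come infinitesimally close but are not actually equal. This is why the boundedness by $B_k(0,\Omega)$ and the fact that we took $\zeta$ to be a \emph{new} infinitesimal over $\tilde\R$ (so that distinct components of $\ZZ(Q,\tilde\R^k)\cap B_k(0,\Omega)$ are separated by a positive, non-infinitesimal-over-$\tilde\R\langle\zeta\rangle$ distance) is essential; the perturbation by $\zeta$ is ``smaller'' than any gap between distinct components. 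This separation argument, combined with the fact that $\{\Def(Q,H,\zeta)\le 0\}$ retracts onto $\ZZ(\Def(Q,H,\zeta),\tilde\R\langle\zeta\rangle^k)$ near each component (a local conic structure / Thom's lemma type argument, as in \cite[Proposition 13.2 and Lemma 13.3]{BPRbook2}), is what pins $D$ down to a single component and yields $\lim_\zeta D=\overline{C}$. I would handle this by quoting the relevant lemma from \cite[Section 13.1]{BPRbook2} essentially verbatim, since $\Def(Q,H,\zeta)$ is precisely of the form to which it applies, with $H$ playing the role of the ``bumping'' nonnegative polynomial.
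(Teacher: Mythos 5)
Your proposal follows essentially the same route as the paper: extend $C$ into the tube $W$, let $D$ be the component of $W$ containing $\Ext(C,\tilde\R\langle\zeta\rangle)$, observe that $D$ is bounded over $\tilde\R$ so that $\lim_\zeta D$ is semi-algebraically connected and (since $Q\ge 0$) contained in $\ZZ(Q,\tilde\R^k)\cap\overline{B_k(0,\Omega)}$, and conclude $\lim_\zeta D=\overline{C}$.

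Two small points. First, in step (4) you assert that $\lim_\zeta D$ ``must lie in $\overline C$ (the connected component)'' without saying why $\overline{C}$ is a semi-algebraically connected component of $\ZZ(Q,\tilde{\R}^k)\cap\overline{B_k(0,\Omega)}$; a priori passing from the open to the closed ball could merge two components along the bounding sphere. The paper justifies this by invoking the conical structure at infinity of $\ZZ(Q,\tilde\R^k)$ (which applies because $Q$ has coefficients in $\R$ and $\Omega$ is infinitely large over $\R$, so the sphere of radius $\Omega$ sits in the conical region). You should make this explicit. Second, the final paragraph about a local retraction of $\{\Def(Q,H,\zeta)\le 0\}$ onto its bounding hypersurface is not needed for this proposition (that kind of argument belongs to Propositions \ref{prop:criticallocusontube} and \ref{prop:main}); once you know $\lim_\zeta D$ is connected, contained in $\ZZ(Q,\tilde\R^k)\cap\overline{B_k(0,\Omega)}$, and contains $\overline{C}$, the equality follows immediately from the component fact above, exactly as in the paper's proof.
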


\begin{proof}
It is clear that $\ZZ(Q,\tilde{\R}\langle \zeta\rangle^k) \cap B_k(0,\Omega)
\subset W$,
since
$H(x) \geq 0$ for all $x\in \tilde{\R}\langle \zeta\rangle^k$. Now let $C$ be
a semi-algebraically connected component of
$\ZZ(Q,\tilde{\R}^k) \cap B_k(0,\Omega)$
and let $D$ be the
semi-algebraically connected component of $W$ containing
$\Ext(C,\tilde{\R}\langle\zeta\rangle)$.
Since $D$ is bounded over $\tilde{\R}$
and semi-algebraically connected
we have $\lim_\zeta D$ is semi-algebraically
connected (using for example Proposition 12.43 in \cite{BPRbook2}),
and contained in $\ZZ(Q,\tilde{\R}^k)
\cap \overline{B_k(0,\Omega)}$.
Moreover, $\lim_\zeta D$ contains $\overline{C}$.
But $\overline{C}$ is a semi-algebraically
connected component of $\ZZ(Q,\tilde{\R}^k) \cap
\overline{B_k(0,\Omega)}$
(using the 
conical structure at infinity of $\ZZ(Q,\tilde{\R}^k)$),
and hence $\lim_\zeta D = \overline{C}$.
\end{proof}

\begin{proposition}
\label{prop:criticallocusontube}
Let
$\tilde{\R}$ be any real closed field containing $\R\langle 1/\Omega\rangle$, and
$$W = \ZZ(\Cr_{k-k'-1}(\Def(Q,H,\zeta)), \tilde{\R}\langle \zeta\rangle^k)
\cap B_k(0,\Omega).
$$
Then, $\lim_\zeta W = \ZZ(Q,\tilde{\R}^k) \cap \overline{B_k(0,\Omega)}$.
\end{proposition}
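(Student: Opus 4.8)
The plan is to show the two inclusions $\lim_\zeta W \supseteq \ZZ(Q,\tilde{\R}^k) \cap \overline{B_k(0,\Omega)}$ and $\lim_\zeta W \subseteq \ZZ(Q,\tilde{\R}^k) \cap \overline{B_k(0,\Omega)}$ separately. Recall that $\Cr_{k-k'-1}(\Def(Q,H,\zeta))$ consists of the polynomial $G := \Def(Q,H,\zeta) = (1-\zeta)Q - \zeta H$ together with its first $k-k'-1$ partial derivatives $\partial G/\partial X_1,\ldots,\partial G/\partial X_{k-k'-1}$, so $W$ is the set of points in $B_k(0,\Omega)$ where $G$ vanishes together with these partials; geometrically $W$ is (a piece of) the polar-type variety of the smooth hypersurface $\ZZ(G,\tilde{\R}\langle\zeta\rangle^k)$ with respect to the projection onto the last $k'+1$ coordinates. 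The key points to assemble are: (i) every semi-algebraically connected component $D$ of the tube $\{G \leq 0\} \cap B_k(0,\Omega)$ meets $W$, in fact $W \cap D$ surjects onto $C = \lim_\zeta D$ under $\lim_\zeta$; and (ii) no point of $W$ limits to a point outside $\ZZ(Q,\tilde{\R}^k)$.

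For the inclusion $\supseteq$, I would argue as follows. By Proposition \ref{prop:limitofdef}, for each semi-algebraically connected component $C$ of $\ZZ(Q,\tilde{\R}^k)\cap B_k(0,\Omega)$ there is a semi-algebraically connected component $D$ of the tube with $\overline{C} = \lim_\zeta D$; since $\overline{C}$ ranges over all semi-algebraically connected components of $\ZZ(Q,\tilde{\R}^k)\cap\overline{B_k(0,\Omega)}$, it suffices to show $\overline{C}\subseteq \lim_\zeta(W\cap D)$. The set $D$ is a bounded semi-algebraic subset of $\tilde{\R}\langle\zeta\rangle^k$ whose boundary is contained in the smooth hypersurface $\ZZ(G,\cdot)$ (away from the sphere $\{|x|=\Omega\}$, which can be handled using the conical structure at infinity as in Proposition \ref{prop:limitofdef} by taking $\Omega$ large, or by working on $\overline{B_k(0,\Omega)}$ throughout). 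Consider the projection $\pi$ onto the first $k-k'-1$ coordinates (equivalently, look at fibers over fixed values of $X_1,\ldots,X_{k-k'-1}$): on each such fiber, which is a bounded semi-algebraic set, the function $G$ attains its minimum, and since genericity of the coordinates (Property \ref{prop:H} together with the good-position hypothesis on $\ZZ(Q,\tilde{\R}^k)$) makes $\ZZ(G,\cdot)$ smooth and the relevant projection generic, the critical points of $G$ restricted to these fibers are exactly the points of $W$. A standard compactness/connectedness argument (as in \cite[Section 13.3]{BPRbook2}) then shows that $W$ meets every semi-algebraically connected component of every nonempty fiber $\pi^{-1}(a)\cap D$, hence the image $\lim_\zeta(W\cap D)$ meets every semi-algebraically connected component of every fiber of the corresponding projection restricted to $\lim_\zeta D = \overline{C}$; since $\overline{C}$ is itself connected and the fibers sweep it out, one concludes $\lim_\zeta(W\cap D) = \overline{C}$, in particular $\overline{C}\subseteq\lim_\zeta W$.

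For the inclusion $\subseteq$, let $x \in W$ with bounded coordinates and set $x_0 = \lim_\zeta x$. From $G(x) = (1-\zeta)Q(x) - \zeta H(x) = 0$ and the fact that $x$ has coordinates in $\tilde{\R}\langle\zeta\rangle_b$ we get, applying $\lim_\zeta$ and using that $\lim_\zeta$ is a ring homomorphism sending $\zeta$ to $0$, that $Q(x_0) = 0$; also $|x_0|\le\Omega$, so $x_0 \in \ZZ(Q,\tilde{\R}^k)\cap\overline{B_k(0,\Omega)}$. (The partial-derivative conditions are not needed for this direction.) Combining the two inclusions gives $\lim_\zeta W = \ZZ(Q,\tilde{\R}^k)\cap\overline{B_k(0,\Omega)}$. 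The main obstacle is the inclusion $\supseteq$: one must verify carefully that the critical-point argument on the fibers actually produces points of $W$ landing in the prescribed component, which is where genericity of the coordinate system (Proposition \ref{prop:generic}) and Property \ref{prop:H} enter, and where one must be careful about the boundary sphere $\{|x| = \Omega\}$ — this is handled exactly as in the proof of Proposition \ref{prop:limitofdef}, by invoking the conical structure at infinity so that no critical behaviour is lost at the boundary.
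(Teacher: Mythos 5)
Your argument for the inclusion $\lim_\zeta W \subseteq \ZZ(Q,\tilde{\R}^k)\cap\overline{B_k(0,\Omega)}$ is fine and matches the paper in substance (the paper simply cites Proposition~\ref{prop:limitofdef}, which contains your computation). The reverse inclusion, however, has a genuine gap, and it is precisely the part the paper has to work for.

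First, there is a factual error in your description of $W$. Recall $\Cr_{k-k'-1}(\Def(Q,H,\zeta))$ consists of $G = \Def(Q,H,\zeta)$ and the partials $\partial G/\partial X_1,\ldots,\partial G/\partial X_{k-k'-1}$. If you fix $X_1,\ldots,X_{k-k'-1}$ and vary $X_{k-k'},\ldots,X_k$, then the critical points of $G$ on such a fiber satisfy $\partial G/\partial X_{k-k'}=\cdots=\partial G/\partial X_k=0$ — the \emph{wrong} partials — and moreover a minimizer of $G$ on a fiber typically lies in the interior of the tube $\{G\le 0\}$, not on $\ZZ(G)$, so the condition $G=0$ is not captured either. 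So the sentence ``the critical points of $G$ restricted to these fibers are exactly the points of $W$'' does not hold; $W$ is (near smooth points of $\ZZ(G)$) the critical locus of the projection $\pi_{[k-k',k]}$ restricted to $\ZZ(G)$, which is produced by fibering over the \emph{last} coordinates and extremizing one of the remaining ones.

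Second, and more importantly, even after correcting the fibration your global argument does not go through, because you do not engage with the stratification $V=V^{(k)}\supset V^{(k-1)}\supset\cdots$ and the good-coordinate hypothesis from Section~\ref{subsec:generic}. The difficulty is that $C$ may contain points where the local dimension of $V = \ZZ(Q,\tilde{\R}^k)$ is strictly less than $k'$; at such a point $x$, the fiber $\pi_{[k-k'+1,k]}^{-1}(\pi_{[k-k'+1,k]}(x))$ can meet $V$ in a positive-dimensional set through $x$, so the tube component over that fiber does not shrink to $\{x\}$ under $\lim_\zeta$ and the extremizer you construct need not limit to $x$ at all. The paper's proof is therefore local and stratified: given $x$, it produces a dense set of nearby points $y\in\textnormal{reg } V^{(i')}$ where the local dimension is $p=d(i')$ and where (by goodness of the coordinates) $T_y(\textnormal{reg } V^{(i')})$ is transverse to $\text{span}(e_1,\ldots,e_{k-p})$, so that $y$ is \emph{isolated} in $V\cap\pi_{[k-p+1,k]}^{-1}(y)$; it then takes the connected component $D_y$ of $T\cap\pi_{[k-p+1,k]}^{-1}(y)$ through $y$ (which satisfies $\lim_\zeta D_y=\{y\}$), and the point $z\in D_y$ maximizing the $(k-p)$-th coordinate lies in $\ZZ(\Cr_{k-p-1}(G))\subset \ZZ(\Cr_{k-k'-1}(G))$ since $p\le k'$, with $\lim_\zeta z = y$. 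The ``standard compactness/connectedness argument'' you invoke is not a substitute for this isolation-and-stratification step; without it there is no control over where the critical points land after applying $\lim_\zeta$.
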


We will use the following notation.
\begin{notation}
For $ 1\leq p \leq q \leq k$,
we denote by  $\pi_{[p,q]}: \R^k=\R^{[1,k]}\rightarrow \R^{[p,q]}$ the projection
$$(x_1, \ldots, x_k)\mapsto (x_p, \ldots, x_q).$$
\end{notation}

\begin{proof}[Proof of Proposition \ref{prop:criticallocusontube}]
By Proposition \ref{prop:limitofdef} it is clear that
$\lim_\zeta W \subset \ZZ(Q,\tilde{\R}^k) \cap \overline{B_k(0,\Omega)}$.
We prove the other inclusion.

Let $V = \ZZ(Q,\tilde{\R}^k)$,
and suppose that $x \in \textnormal{reg } V^{(i)} \cap B_k(0,\Omega)$ for
some $i, k-k' \leq i \leq k$.
Every open 
semi-algebraic
neighbourhood $U$ of $x$ in
$V \cap B_k(0,\Omega)$
contains a point $y \in \textnormal{reg } V^{(i')} \cap B_k(0,\Omega)$
for some $i' \geq i$, such that
the local dimension of $V$ at
$y$ is equal to $d(i')$.
Moreover, since the given system of co-ordinates is assumed to be good for
$V$,
we can also assume that 
the tangent space
$T_{y}(\textnormal{reg }\; V^{(i')})$
is transverse to the span of the first $k-
d(i')$ co-ordinate
vectors.

It suffices to prove that there exists $z \in W$ such that $\lim_\zeta z = y$.
If this is  true for every neighbourhood $U$ of $x$ in $V$, this
would imply that $x \in \lim_\zeta W$.

Let $p = d(i')$.
The property that
$T_{y}(\textnormal{reg }\; V^{(i')})$
is transverse to the span of the first $k-p$ co-ordinate
vectors implies that $y$ is an isolated point of
$V \cap \pi_{[k-p+1,k]}^{-1}(y)$.
Let $T \subset \tilde{\R}\langle \zeta\rangle^k$
denote the semi-algebraic subset of $B_k(0,\Omega)$
defined by
$$
T = \{ x \in B_k(0,\Omega)\; |\; \Def(Q,H,\zeta)(x) \leq 0 \},
$$
and
$D_y$ denote the semi-algebraically connected component of
$T \cap  \pi_{[k-p+1,k]}^{-1}(y)$ containing $y$. Then,
$D_y$ is a closed and bounded semi-algebraic set, with
$\lim_\zeta D_y = y$.
The boundary of $D_y$ is contained in
\[\ZZ(\Def(Q,H,\zeta),\tilde{\R}\langle \zeta\rangle^k) \cap \pi_{[k-p+1,k]}^{-1}(y).
\]
Let $z \in D_y$ be a point in $D_y$ for which the $(k-p)$-th
co-ordinate achieves
its maximum. Then,
$z \in \ZZ(\Cr_{k-p-1}(\Def(Q,H,\zeta)),\tilde{\R}\langle \zeta\rangle^k)$,
and since $p \leq k'$,
$$\ZZ(\Cr_{k-p-1}(\Def(Q,H,\zeta)),\tilde{\R}\langle \zeta\rangle^k)
\subset
\ZZ(\Cr_{k-k'-1}(\Def(Q,H,\zeta)),\tilde{\R}\langle \zeta\rangle^k)$$
and
hence, $z \in W$. Moreover,
$\lim_\zeta z = y$.
\end{proof}

\begin{proposition}
\label{prop:general_pos_for_cr}
Let $\tilde{\R}$ be any real closed field containing $\R$ and $\tilde{\C}$ the algebraic closure of $\tilde{\R}$.
For every $p,\; 0 \leq p \leq k$, $\Cr_p^h(\Def(Q,H,\zeta))$
defines a non-singular complete intersection in
$\PP^k_{\tilde{\C}\langle \zeta\rangle}$.
\end{proposition}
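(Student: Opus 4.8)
The plan is to realize $\Cr_p^h(\Def(Q,H,\zeta))$ as one member of a one-parameter family of systems of homogeneous forms, all but finitely many of which define non-singular complete intersections in $\PP_{\tilde{\C}}^k$, and then to use that $\zeta$ is transcendental over $\tilde{\R}$ in order to place the member corresponding to $\zeta$ outside the exceptional set. Introduce a parameter $u$ and set $G_u = (1-u)Q - uH$, so $G_\zeta = \Def(Q,H,\zeta)$. We may assume $H$ has degree exactly $2d_0$ with positive definite leading form (a generic element of $\Pos_{\R,2d_0,k}$ has this, and by the proof of Proposition \ref{prop:generic_polar} such an $H$ can also be taken to satisfy Property \ref{prop:H}); since $\deg Q \leq 2d_0$, for all but finitely many $u$ the polynomial $G_u$ has degree $2d_0$ in $X_1,\dots,X_k$ and each $\partial G_u/\partial X_j$ has degree $2d_0-1$, so we may homogenize $\Cr_p(G_u)$ in the natural way to obtain a system $\Cr_p^h(G_u)$ of $p+1$ forms in $X_0,\dots,X_k$.

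The technical heart is to show that the set $B$ of $c \in \mathbb{A}^1$ for which $\Cr_p^h(G_c)$ fails to define a non-singular complete intersection of codimension $p+1$ in $\PP_{\tilde{\C}}^k$ --- with the empty set counting as such when $p+1>k$ --- is Zariski closed. For this I would take the incidence set in $\mathbb{A}^1 \times \PP_{\tilde{\C}}^k$ of pairs $(c,x)$ with $x$ a common zero of $\Cr_p^h(G_c)$ at which the Jacobian of the $p+1$ forms has rank $<p+1$, push it forward along the proper projection to $\mathbb{A}^1$ (which keeps it closed), and combine this with the upper semicontinuity of the dimension of the common zero locus; this parallels standard arguments, e.g. in \cite[Sections 13.1, 13.3]{BPRbook2}. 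I expect the bookkeeping here --- isolating the codimension (``complete intersection'') part of the condition rather than only the smoothness part, and matching degree conventions across the family --- to be the main obstacle.

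That $B \neq \mathbb{A}^1$ follows by evaluating at $c = 1$: there $G_1 = -H$, and $\Cr_p^h(-H)$ generates the same homogeneous ideal as $\Cr_p^h(H)$, which by Property \ref{prop:H} defines a non-singular complete intersection in $\PP_\C^k$; since this is a constructible condition on the $\R$-rational coefficients of $H$, it remains valid over the algebraically closed extension $\tilde{\C}$, so $1 \notin B$. A proper Zariski closed subset of the affine line over a field is cut out by a single nonzero polynomial $g \in \R[u]$ (only the $\R$-coefficients of $Q,H$ and universal operations such as resultants and Jacobian minors enter), and since $\zeta$ is transcendental over $\R$ we get $g(\zeta) \neq 0$, hence $\zeta \notin \Ext(B,\tilde{\C}\la\zeta\ra)$. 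Therefore $\Cr_p^h(\Def(Q,H,\zeta)) = \Cr_p^h(G_\zeta)$ defines a non-singular complete intersection in $\PP_{\tilde{\C}\la\zeta\ra}^k$; running this for each $p$, $0 \leq p \leq k$, proves the proposition. In the borderline case $p = k$ the claim at $c=1$ is just that the $k+1$ forms $\Cr_k^h(H)$ have no common projective zero, and everything else is unchanged.
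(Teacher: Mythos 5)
Your proof is correct and takes essentially the same approach as the paper: perturb from $t=1$ (where $\Def(Q,H,1) = -H$ and Property~\ref{prop:H} applies), observe that the bad locus of parameters is a proper constructible subset of the affine line, and conclude that the infinitesimal parameter $\zeta$ lies outside it. The only cosmetic differences are in the final step --- the paper argues that the cofinite good set contains an open interval to the right of $0$ and hence contains the positive infinitesimal $\zeta$, whereas you invoke transcendence of $\zeta$ over $\R$, both valid --- and in the middle, where you spell out the incidence/properness argument for Zariski-closedness of the bad locus, which the paper dispatches more briefly by citing first-order expressibility and stability of the non-singular complete intersection condition.
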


\begin{proof}
By Property \ref{prop:H} of $H$,
we have that
for each $p,\; 0 \leq p \leq k$, $\Cr_p^h(H)$
defines a non-singular complete intersection in
$\PP^k_{\tilde{\C}}$.
Thus, for each $p,\; 0 \leq p \leq k$, $\Cr_p^h(\Def(Q,H,1))$
defines a non-singular complete intersection in
$\PP^k_{\tilde{\C}}$.
Since the property of being non-singular complete intersection
is first order expressible, the set of
$t \in \tilde{\C}$
for which this holds
is constructible, and since the property is also stable there is an open
subset containing $1$ for which it holds. But since a constructible subset of
$\tilde{\C}$
is either finite or co-finite, there exists
an open interval to the right of $0$ in
$\tilde{\R}$ for which the property holds,
and in particular it holds for infinitesimal $\zeta$.
\end{proof}

\begin{proposition}
\label{prop:nonstrict-to-strict}
Let $\sigma \in \{0,+1,-1\}^{\mathcal{P}}$, and let
let $C$ be a semi-algebraically connected component of
$\RR(\sigma,\ZZ(Q,\R\langle 1/\Omega \rangle^k) \cap B_k(0,\Omega))$.
Then, there exists a unique semi-algebraically connected component,
$D \subset \R\langle 1/\Omega,\eps,\delta\rangle^k$,
of the semi-algebraic set defined by
$$
\displaylines{
(Q = 0)
\wedge
\bigwedge_{P \in \mathcal{P}\atop \sigma(P)=0}
(-\delta  < P < \delta)
\wedge
\bigwedge_{P \in \mathcal{P}\atop \sigma(P)=1} (P  > \eps) \wedge
\bigwedge_{P \in \mathcal{P}\atop \sigma(P)=-1} (P <  -\eps)
\wedge
(|X|^2 < \Omega^2)
}
$$
such that
$C = D \cap \R
\langle 1/\Omega \rangle
^k$.
Moreover, if $C$ is a semi-algebraically connected component of
$\RR(\sigma,\ZZ(Q,\R\langle 1/\Omega \rangle^k)\cap B_k(0,\Omega))$,  
$C'$ is a semi-algebraically connected component of 
$\RR(\sigma',\ZZ(Q,\R\langle 1/\Omega \rangle^k) \cap B_k(0,\Omega))$, and
$D,D'$ are the unique semi-algebraically connected components as above satisfying 
$C=D\cap \R\langle 1/\Omega \rangle^k,\; C'=D'\cap \R\langle 1/\Omega \rangle^k$,
then we have $\overline{D}\cap \overline{D'}=\emptyset$ if $C\neq C'$.

\end{proposition}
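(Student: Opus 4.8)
The plan is to use the standard infinitesimal "shrinking" construction twice: first with $\eps$ to replace the strict inequalities $P>0,\;P<0$ by $P>\eps,\;P<-\eps$, and then with $\delta$ to replace the equalities $P=0$ by the thickened band $-\delta<P<\delta$. Each such step is an instance of the general principle (see \cite[Section 13.1]{BPRbook2}) that if $S\subset\R\la\eta\ra^k$ is a closed and bounded semi-algebraic set described by a conjunction of weak and strict sign conditions, and one perturbs the description by an infinitesimal $\eta$ so that the new set $S'\subset\R^k$ is closed, then each semi-algebraically connected component $C$ of $S'$ is contained in a unique semi-algebraically connected component $D$ of $S$ with $\lim_\eta D = C$ (after intersecting with the original field), and moreover distinct components $C\neq C'$ of $S'$ give rise to components $D,D'$ whose closures are disjoint. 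Here I will apply this with $\R$ replaced by $\R\la 1/\Omega\ra$, and with the two infinitesimals $\eps$ and $\delta$ introduced successively, using the convention $\R\la 1/\Omega,\eps,\delta\ra = \R\la 1/\Omega\ra\la\eps\ra\la\delta\ra$ and $\lim_\eps = \lim_\eps\lim_\delta$ as fixed in Section \ref{subsec:puiseux}.

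In more detail, I would proceed as follows. \textbf{Step 1 (bounding and the $\eps$-perturbation).} The realization $\RR(\sigma,\ZZ(Q,\R\la1/\Omega\ra^k)\cap B_k(0,\Omega))$ is bounded but generally neither open nor closed. Introduce the set
$$
S_\eps \;=\; (Q=0)\ \wedge\!\!\bigwedge_{\sigma(P)=0}\!\!(P=0)\ \wedge\!\!\bigwedge_{\sigma(P)=1}\!\!(P>\eps)\ \wedge\!\!\bigwedge_{\sigma(P)=-1}\!\!(P<-\eps)\ \wedge\ (|X|^2<\Omega^2)\,,
$$
a semi-algebraic subset of $\R\la1/\Omega,\eps\ra^k$. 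Its closure $\overline{S_\eps}$ is obtained by relaxing all strict inequalities; then $\lim_\eps\overline{S_\eps}$ recovers $\RR(\sigma,\ZZ(Q,\R\la1/\Omega\ra^k)\cap\overline{B_k(0,\Omega)})$, and the component-correspondence principle gives, for each component $C$ of the latter, a unique component of $\overline{S_\eps}$ limiting onto it, with the closure-disjointness for distinct $C$. \textbf{Step 2 (the $\delta$-thickening).} Now work over $\R\la1/\Omega,\eps\ra$ and replace each conjunct $P=0$ (for $P$ with $\sigma(P)=0$) by $-\delta<P<\delta$; call the resulting subset of $\R\la1/\Omega,\eps,\delta\ra^k$ exactly the set $S$ in the statement. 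Again $\lim_\delta S$ has as its components (essentially) those of the $\eps$-stage set, with the unique-lift and closure-disjointness properties. \textbf{Step 3 (composition).} Compose the two correspondences: since $\lim_{1/\Omega,\eps,\delta}$-limits and the closure operation interact well (Proposition 12.43 and the conical structure facts already invoked in Proposition \ref{prop:limitofdef}), for each component $C$ of $\RR(\sigma,\ZZ(Q,\R\la1/\Omega\ra^k)\cap B_k(0,\Omega))$ one gets a unique component $D$ of $S$ with $C=D\cap\R\la1/\Omega\ra^k$, and for $C\neq C'$ (possibly from different sign conditions $\sigma,\sigma'$) the lifted components $D,D'$ satisfy $\overline{D}\cap\overline{D'}=\emptyset$.

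The closure-disjointness assertion across different sign conditions is where I expect the real work to be, and it is worth isolating. If $\sigma\neq\sigma'$ there is some $P$ with, say, $\sigma(P)=1$ and $\sigma'(P)\in\{0,-1\}$; then on $D$ one has $P>\eps$ while on $D'$ one has $P<\delta$, and since $\delta\ll\eps$ these half-spaces have disjoint closures in $\R\la1/\Omega,\eps,\delta\ra$, so $\overline{D}\cap\overline{D'}=\emptyset$ immediately. If $\sigma=\sigma'$ but $C\neq C'$, the disjointness of $\overline{D}$ and $\overline{D'}$ is exactly the closure-disjointness output of the two-step perturbation principle, whose proof rests on the fact that a small enough perturbation does not merge distinct components and does not create new "bridges" between the closures of previously separated pieces — this is the standard but slightly delicate point, handled by a Hardt-triviality / curve-selection argument as in \cite[Lemma 13.3, Proposition 13.5]{BPRbook2}. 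The uniqueness of $D$ follows because $D$ is forced to be the component containing $\Ext$ of a suitable curve limiting onto $C$, and any two components of $S$ limiting onto the same $C$ would have to coincide by semi-algebraic connectedness of the limit.
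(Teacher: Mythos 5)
Your overall strategy (realize that a bounded component over the base field lifts uniquely to a component of the infinitesimally thickened set, and that distinct components lift to sets with disjoint closures) is the right one, but the execution diverges from the paper in a way that leaves the hardest step unresolved. The paper treats the existence and uniqueness of $D$ as immediate, and proves the closure-disjointness by a single short argument that you did not identify: if $x \in \overline{D}\cap\overline{D'}$, then $\lim_\delta x$ lies in $\Ext(C,\R\la1/\Omega,\eps\ra)\cap\Ext(C',\R\la1/\Omega,\eps\ra)$, which is empty because $C\cap C'=\emptyset$. This one observation handles \emph{simultaneously} the case $\sigma\neq\sigma'$ and the case $\sigma=\sigma'$, $C\neq C'$. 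You instead split into two cases, dispatch $\sigma\neq\sigma'$ by comparing $\eps$ and $\delta$ directly (correct, if slightly more laborious than necessary), and for the $\sigma=\sigma'$ case you defer entirely to ``a Hardt-triviality / curve-selection argument as in \cite[Lemma 13.3, Proposition 13.5]{BPRbook2}.'' That deferral is exactly the part that a proof of this proposition is expected to supply, and the $\lim_\delta$-contradiction above is the clean way to supply it without invoking Hardt triviality.

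Two further points worth flagging. First, your proposed intermediate set $S_\eps$ keeps the equalities $P=0$ and perturbs only the strict inequalities; you then claim that $\lim_\eps\overline{S_\eps}$ ``recovers'' $\RR(\sigma,\ZZ(Q)\cap\overline{B_k(0,\Omega)})$. This is not quite right: $\lim_\eps$ of $P\geq\eps$ gives only $P\geq 0$, so $\lim_\eps\overline{S_\eps}$ is contained in the realization of the \emph{weak relaxation} $\overline{\sigma}$, not of $\sigma$ itself; the correspondence between components requires a bit more care than your phrasing suggests. Second, the two-step $\eps$-then-$\delta$ staging is unnecessary overhead: the one defining formula in the statement already encodes both perturbations at once, and the component correspondence can be read off directly from the facts that $\Ext(C,\R\la1/\Omega,\eps,\delta\ra)$ is semi-algebraically connected and contained in the thickened set, while any point of the thickened set lying in $\R\la1/\Omega\ra^k$ actually satisfies $\sigma$. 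In short, the proposal would become a proof if you replaced the appeal to Hardt triviality in the same-$\sigma$ case with the $\lim_\delta$-argument, and tightened the claim about what $\lim_\eps\overline{S_\eps}$ recovers.
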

\begin{proof}
The first part is clear.  To prove the second part suppose, seeking a contradiction, that $x\in \overline{D}\cap \overline{D'}$.  Notice that $\lim_{\delta} x \in \Ext(C,\R\langle 1/\Omega ,\ep \rangle)\cap \Ext(C',\R\langle 1/\Omega,\ep)$, but $\Ext(C,\R\langle 1/\Omega,\ep \rangle)\cap \Ext(C',\R\langle 1/\Omega,\ep\rangle)=\emptyset$ since $C\cap C'=\emptyset$.
\end{proof}

We denote by $\R'$ the real closed field
$\R\langle 1/\Omega,\eps,\delta\rangle$
and by $\C'$ the algebraic closure of $\R'$.

We also denote
\begin{equation}
\label{eqn:G}
G = \sum_{i=1}^{k} X_i^2 - \Omega^2.
\end{equation}

Let
$\mathcal{P}' \subset \R'[X_1,\ldots,X_k]$
be defined by
$$
\displaylines{
\mathcal{P}' =
\bigcup_{P \in \mathcal{P}} \{P\pm \eps, P\pm \delta\} \cup \{G\}.
}
$$

By Proposition \ref{prop:nonstrict-to-strict} we will henceforth restrict
attention to strict sign conditions on the family $\mathcal{P}'$.

Let 
$\mathcal{H} = 
(H_F \in \Pos_{\R',\deg(F),k}
)_{F \in \mathcal{P}'} $
be a family of polynomials with generic coefficients.  
More precisely, this means that $\mathcal{H}$ is chosen so that it 
avoids a certain Zariski 
closed subset of the product 
$\displaystyle{
\times_{F\in \mathcal{P}'} \Pos_{\R'\langle\zeta\rangle,\deg(F),k}
}
$ 
of codimension at least one, defined by the condition that
$$
\Cr_{k-k'-1}^h(\Def(Q,H,\zeta)) \cup
\bigcup_{F \in \mathcal{P}''}\{H_F^h\}
$$
is not a non-singular, complete intersection in
$\PP_{\C'\langle\zeta\rangle}^k$ for some 
${\mathcal P}'' \subset \mathcal{P}'$.

\begin{proposition}
\label{prop:general_pos}
For each $j, \; 0 \leq  j \leq k'$,
and subset ${\mathcal P}'' \subset \mathcal{P}'$ with
$\card \;\mathcal{P}'' = j$,
and $\tau \in \{-1,+1\}^{\mathcal{P}''}$
the set of homogeneous polynomials
$$
\Cr_{k-k'-1}^h(\Def(Q,H,\zeta)) \cup
\bigcup_{F \in \mathcal{P}''}\{(1 -\eps') F^h  - \tau(F)\; \eps'\; H_F^h\}
$$
defines a non-singular, complete intersection in
$\PP_{\C'\langle\zeta,\eps'\rangle}^k$.
\end{proposition}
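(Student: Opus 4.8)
The plan is to deform the displayed system in the auxiliary parameter $\eps'$ and then repeat, almost verbatim, the first-order/constructibility argument from the proof of Proposition \ref{prop:general_pos_for_cr}, now over the coefficient field $\C'\langle\zeta\rangle$ in place of $\tilde{\C}$. First I would introduce a parameter $t$ and consider the family
$$
\mathcal{G}_t = \Cr_{k-k'-1}^h(\Def(Q,H,\zeta)) \cup
\bigcup_{F \in \mathcal{P}''}\{(1 -t)\, F^h  - \tau(F)\, t\, H_F^h\},
$$
observing that it consists of $(k-k'-1)+1+\card\;\mathcal{P}'' = k-k'+j \leq k$ homogeneous polynomials, so that ``being a complete intersection'' is the expected-codimension condition, and that the coefficients of its members are polynomials in $t$ with coefficients in $\R'\langle\zeta\rangle$.

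Next I would establish the base case $t = 1$. Substituting $t=1$ turns $(1-t)\,F^h - \tau(F)\,t\,H_F^h$ into $-\tau(F)\,H_F^h$, and since $-\tau(F) \in \{-1,+1\}$ is a nonzero scalar, $\mathcal{G}_1$ cuts out the same subscheme of $\PP^k_{\C'\langle\zeta\rangle}$ as $\Cr_{k-k'-1}^h(\Def(Q,H,\zeta)) \cup \bigcup_{F\in\mathcal{P}''}\{H_F^h\}$. But the family $\mathcal{H}$ was chosen precisely to avoid a codimension $\geq 1$ subset of $\times_{F\in\mathcal{P}'}\Pos_{\R'\langle\zeta\rangle,\deg(F),k}$, so that this last set of forms is a non-singular complete intersection in $\PP^k_{\C'\langle\zeta\rangle}$ for \emph{every} $\mathcal{P}'' \subset \mathcal{P}'$; hence $\mathcal{G}_1$ defines a non-singular complete intersection in $\PP^k_{\C'\langle\zeta\rangle}$.

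Finally I would invoke first-order expressibility and stability exactly as in Proposition \ref{prop:general_pos_for_cr}: the condition ``$\mathcal{G}_t$ defines a non-singular complete intersection'' is first-order expressible in $t$, so the set $S$ of those $t \in \C'\langle\zeta\rangle$ for which it holds is constructible, and since the condition is stable there is an open subset of $\C'\langle\zeta\rangle$ containing $1$ on which it holds. A constructible subset of the affine line over an algebraically closed field is finite or cofinite, so $S$ is cofinite; hence $S \cap \R'\langle\zeta\rangle$ is cofinite in $\R'\langle\zeta\rangle$ and contains an open interval $(0,r)$ with $0 < r \in \R'\langle\zeta\rangle$. By the Tarski-Seidenberg transfer principle the statement ``for all $t$ with $0 < t < r$, $\mathcal{G}_t$ defines a non-singular complete intersection'' holds over the extension $\R'\langle\zeta,\eps'\rangle$, and since $\eps'$ is a positive infinitesimal it lies in $\Ext((0,r),\R'\langle\zeta,\eps'\rangle)$; therefore $\mathcal{G}_{\eps'}$, which is exactly the family in the statement, defines a non-singular complete intersection in $\PP^k_{\C'\langle\zeta,\eps'\rangle}$.

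The argument is routine given Proposition \ref{prop:general_pos_for_cr} and the genericity of $\mathcal{H}$; the one step I expect to require care is the base case, where one must notice that at $t=1$ the deformed form is $-\tau(F)\,H_F^h$, a nonzero constant multiple of the generic form $H_F^h$, so that the non-singularity and transversality secured by the choice of $\mathcal{H}$ carry over unchanged --- and, more bookkeeping than difficulty, one must keep the tower of coefficient fields straight, $\eps'$ being infinitesimal over $\R'\langle\zeta\rangle$, which already contains $\zeta$.
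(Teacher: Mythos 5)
Your proof follows the same route as the paper's: substitute a parameter $t$ for $\eps'$, observe via the genericity of $\mathcal{H}$ (and Proposition~\ref{prop:general_pos_for_cr}) that at $t=1$ the system is a non-singular complete intersection, then use constructibility and stability of that property together with the finite-or-cofinite dichotomy over an algebraically closed field to conclude for the infinitesimal $\eps'$. Your write-up merely makes two steps that the paper treats tersely more explicit --- that at $t=1$ the deformed form is the nonzero scalar multiple $-\tau(F)H_F^h$, and the final Tarski--Seidenberg transfer from an interval $(0,r)\subset\R'\langle\zeta\rangle$ to the extension $\R'\langle\zeta,\eps'\rangle$ --- so it is essentially the same argument.
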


\begin{proof}
Consider the family of polynomials,
$$
\displaylines{
\Cr_{k-k'-1}^h(\Def(Q,H,\zeta)) \cup
\bigcup_{F \in \mathcal{P}''}\{(1 - t) F^h - \tau(F)\; t\; H_F^h\}
}
$$
obtained by substituting $t$ for $\eps'$ in the given system.
Since, by Proposition \ref{prop:general_pos_for_cr}
the set $\Cr_{k-k'-1}^h(\Def(Q,H,\zeta))$ defines
a non-singular complete intersection in $\PP_{\C'\langle \zeta \rangle}$, and the $H_F$'s are
chosen generically,
the above system defines a non-singular complete intersection in
$\PP_{\C'\langle \zeta \rangle}$ when $t=1$. The set of $t \in \C'\langle \zeta \rangle$  for which the above system
defines a non-singular, complete intersection is constructible, contains $1$,
and since being a non-singular, complete intersection is a stable condition,
it is co-finite. Hence, it must contain an 
open interval to the right of 0 in $\R'\langle \zeta \rangle$, and hence in particular if we substitute the infinitesimal $\eps'$ for
$t$ we obtain that the system defines a non-singular, complete intersection
in $\PP_{\C'\langle\zeta, \eps'\rangle}^k$.
\end{proof}

\begin{proposition}
\label{prop:main}
Let $\tau \in \{+1,-1\}^{\mathcal{P}'}$ with
$\tau(F) = -1$, if $F = G$
(Eqn. \ref{eqn:G}),
and
let $C$ be a semi-algebraically
connected component of
$\RR(\tau,\ZZ(Q,\R'^k))$.

Then, there exists a
a subset
$\mathcal{P}'' \subset \mathcal{P'}$ with
$\card \;\mathcal{P}''  \leq k'$,
and
a
bounded
semi-algebraically connected component $D$ of
the algebraic set
$$
\displaystyle{
\ZZ(\Cr_{k-k'-1}(\Def(Q,H,\zeta)) \cup
\bigcup_{F \in \mathcal{P}''}\{(1 -\eps') F - \tau(F)\; \eps'\; H_F\},
\R'\langle\zeta,\eps'\rangle^k)
}
$$
such that
$\lim_\zeta D \subset \overline{C}$,
and $\lim_\zeta D \cap C\neq \emptyset$.
\end{proposition}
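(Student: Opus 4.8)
The plan is to carry out a ``critical point on a tube'' argument analogous to the proofs of Propositions \ref{prop:limitofdef} and \ref{prop:criticallocusontube}, but now taking into account the extra conditions coming from the polynomials in $\mathcal{P}'$. First I would fix a point $x$ lying in the semi-algebraically connected component $\overline{C}$ of $\ZZ(Q,\R'^k)\cap\overline{B_k(0,\Omega)}$ (note $\tau(G)=-1$ forces $C$ to be bounded, so $C\subset B_k(0,\Omega)$). Using that the given coordinate system is good with respect to $\ZZ(Q,\R'^k)$, I would, exactly as in the proof of Proposition \ref{prop:criticallocusontube}, pass to a nearby point $y\in\textnormal{reg }V^{(i')}$ at which the local dimension $p=d(i')\leq k'$ is realized and the tangent space $T_y(\textnormal{reg }V^{(i')})$ is transverse to $\spanof\{e_1,\dots,e_{k-p}\}$, while keeping $y$ inside the open realization $\RR(\tau,\ZZ(Q,\R'^k))$ restricted to $C$. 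Since $y$ lies in the open set where all the strict sign conditions $\tau(F)$ on $F\in\mathcal{P}'$ hold, and these remain valid after the infinitesimal deformations $(1-\eps')F-\tau(F)\eps' H_F$ (the deformation only moves the hypersurface $\ZZ(F)$ by an infinitesimal amount, and $H_F\geq 0$), the point $\Ext(y,\R'\langle\zeta,\eps'\rangle)$ satisfies the required sign conditions.

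Next I would localize: the transversality of $T_y(\textnormal{reg }V^{(i')})$ to $\spanof\{e_1,\dots,e_{k-p}\}$ makes $y$ an isolated point of $\ZZ(Q,\R'^k)\cap\pi_{[k-p+1,k]}^{-1}(y)$, hence an isolated point of $\RR(\tau,\ZZ(Q,\R'^k))\cap\pi_{[k-p+1,k]}^{-1}(y)$ as well. Consider the basic closed semi-algebraic subset $T\subset\R'\langle\zeta,\eps'\rangle^k$ cut out by $\Def(Q,H,\zeta)\leq 0$ together with the (weakened, closed) sign conditions $\tau(F)\cdot((1-\eps')F-\tau(F)\eps' H_F)\geq 0$ for $F\in\mathcal{P}''$, where $\mathcal{P}''\subseteq\mathcal{P}'$ is the (at most $k'$-element) set of those $F$ whose zero set passes infinitesimally close to $y$ — i.e.\ those $F$ for which the deformed hypersurface $\ZZ((1-\eps')F-\tau(F)\eps'H_F)$ meets the fiber near $y$. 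Let $D_y$ be the semi-algebraically connected component of $T\cap\pi_{[k-p+1,k]}^{-1}(y)$ containing $y$; as in Proposition \ref{prop:criticallocusontube} it is closed and bounded with $\lim_\zeta\lim_{\eps'}D_y=\{y\}$. Choosing $z\in D_y$ maximizing the $(k-p)$-th coordinate, $z$ either lies on a face of $D_y$ given by $\Def(Q,H,\zeta)=0$, in which case $z\in\ZZ(\Cr_{k-p-1}(\Def(Q,H,\zeta)),\cdot)\subset\ZZ(\Cr_{k-k'-1}(\Def(Q,H,\zeta)),\cdot)$ since $p\leq k'$, or on a face given by one of the deformed equations $(1-\eps')F-\tau(F)\eps'H_F=0$; a standard Lagrange-multiplier / dimension count on the fiber (which has dimension $p$) shows that after recording each active deformed equation as a member of $\mathcal{P}''$ we land in the variety $\ZZ(\Cr_{k-k'-1}(\Def(Q,H,\zeta))\cup\bigcup_{F\in\mathcal{P}''}\{(1-\eps')F-\tau(F)\eps'H_F\},\cdot)$, with at most $k'$ equations from $\mathcal{P}''$ because the fiber is $p\leq k'$-dimensional. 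Hence $z$ lies in the claimed algebraic set; let $D$ be its semi-algebraically connected component. Since $z\in D_y$ we get $\lim_\zeta\lim_{\eps'}z=y$, so $\lim_\zeta D\cap C\neq\emptyset$ (as $y\in C$), and by the same limit/connectedness argument used in Proposition \ref{prop:limitofdef} (Proposition 12.43 of \cite{BPRbook2}) $\lim_\zeta D$ is semi-algebraically connected and contained in $\ZZ(Q,\R'^k)$; since it meets $C$ and $C$ is a connected component, $\lim_\zeta D\subset\overline{C}$. Boundedness of $D$ follows from $\tau(G)=-1$, i.e.\ $G<0$, which confines everything to $B_k(0,\Omega)$.

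The main obstacle I anticipate is the bookkeeping of exactly which polynomials to put into $\mathcal{P}''$ and proving the bound $\card\;\mathcal{P}''\leq k'$ rigorously: one must argue that at the maximizing point $z$, the number of independent \emph{gradient} constraints coming from active faces of $D_y$ (beyond those of $\Cr_{k-p-1}(\Def(Q,H,\zeta))$) cannot exceed $p\leq k'$, because $z$ sits on a $p$-dimensional fiber and a simultaneous transversal intersection of more than $p$ hypersurfaces with that fiber would be empty generically — this is where Proposition \ref{prop:general_pos} (non-singular complete intersection, hence correct codimension) is used to guarantee that only the ``right'' number of deformed equations can be simultaneously active, and that the resulting system is indeed a complete intersection so that the later Betti-number bounds apply. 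A secondary technical point is verifying that passing to the weakened closed sign conditions $\geq 0$ (needed so that $D_y$ is closed and bounded) does not destroy the property $\lim_\zeta D\cap C\neq\emptyset$; this is handled because $y$ itself, on which all \emph{strict} conditions hold, lies in $\overline{D_y}$ and hence its limit lies in $C$.
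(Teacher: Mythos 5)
Your plan diverges from the paper's proof in a way that creates a genuine gap. The paper factors the argument into two independent steps: (i) apply Proposition~\ref{prop:criticallocusontube} to produce, for any $x\in C$, a point $y$ in the $k'$-dimensional critical locus $\ZZ(\Cr_{k-k'-1}(\Def(Q,H,\zeta)),\R'\la\zeta\ra^k)$ with $\lim_\zeta y = x$, which, since $\RR(\tau,\R'^k)$ is open, still satisfies $\tau$; this yields a connected component $C'$ of the critical locus intersected with $\RR(\tau,\R'^k)$. (ii) Then it invokes the deformation argument of Proposition 13.2 in \cite{BPRbook2} \emph{on the fixed $k'$-dimensional variety $\ZZ(\Cr_{k-k'-1}(\Def(Q,H,\zeta)))$}, using Proposition~\ref{prop:general_pos} to ensure no $k'+1$ of the perturbed $F$'s share a zero on that variety, so $\card\;\mathcal{P}''\le k'$ comes out for free.

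Your plan collapses these two steps into a single critical-point optimization on the ``tube with sign conditions'' $T=\{\Def(Q,H,\zeta)\le 0\}\cap\{\tau(F)\tilde F\ge 0\}$, restricted to a fiber $\pi^{-1}_{[k-p+1,k]}(y)$, by maximizing the coordinate $X_{k-p}$. The trouble is that the Lagrange/KKT condition at the maximizer $z$ only says that $e_{k-p}$ (projected to the fiber) lies in the cone spanned by the gradients of the \emph{active} constraints. When the only active constraint is $\Def=0$, this does force $\partial\Def/\partial X_i=0$ for $i=1,\dots,k-p-1$ (this is exactly the proof of Proposition~\ref{prop:criticallocusontube}). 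But as soon as even one perturbed hypersurface $\tilde F=0$ is also active, the condition becomes a \emph{linear combination} $\lambda\nabla\Def+\mu\nabla\tilde F\parallel e_{k-p}$, and this does not imply $\partial\Def/\partial X_i=0$ for the needed range of $i$ — so $z$ need not lie in $\ZZ(\Cr_{k-k'-1}(\Def(Q,H,\zeta)))$ at all, and hence not in the algebraic set claimed in the proposition. The proposition demands the critical-locus equations and the perturbed $F$'s to vanish \emph{simultaneously}, and your single-shot critical-point selection cannot deliver that; the paper avoids the problem by doing the two optimizations sequentially rather than jointly.

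A secondary issue is that your definition of $\mathcal{P}''$ (``those $F$ whose zero set passes infinitesimally close to $y$'') is not a well-defined combinatorial datum and is circular with how $T$ is built; the bound $\card\;\mathcal{P}''\le k'$ is then supported only by an appeal to a ``dimension count on the $p$-dimensional fiber,'' which is not a proof — and in fact the natural dimension count applies to the critical locus (dimension $k'$), not to a fiber, which is how Proposition~\ref{prop:general_pos} enters the paper's argument. To repair your plan, you would need to first land on $\ZZ(\Cr_{k-k'-1}(\Def(Q,H,\zeta)))$ (using the fiber-maximization with $T$ consisting only of $\Def\le 0$), and then run the sign-condition deformation argument of \cite[Prop.~13.2]{BPRbook2} on that variety — which is exactly what the paper does.
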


\begin{proof}
Let $V = \ZZ(Q,\R'^k)$.
By Proposition \ref{prop:criticallocusontube}, with $\tilde{\R}$ substituted by $\R'$, we have that,
for each $x \in C$ there exists
$y \in \ZZ(\Cr_{k-k'-1}(\Def(Q,H,\zeta), \R'\langle \zeta\rangle^k)$ such that
$\lim_\zeta y = x$.

Moreover, using the fact that
$\RR(\tau,\R'^k)$ is open we have that
\[
y \in \ZZ(\Cr_{k-k'-1}(\Def(Q,H,\zeta)), \R'\langle \zeta\rangle^k)
\cap \RR(\tau,\R'^k).
\]

Thus, there exists a semi-algebraically
connected component $C'$ 
of
$$\ZZ(\Cr_{k-k'-1}(\Def(Q,H,\zeta)), \R'\langle\zeta\rangle^k)
\cap \RR(\tau,\R'^k),
$$
such that
$\lim_\zeta C' \subset \overline{C}$,
and $\lim_\zeta C' \cap C \neq \emptyset$.

Note that the closure $\overline{C}$ is a semi-algebraically
connected component of
\[
\RR(\overline{\tau},\ZZ(Q,\R'^k)),
\]
where $\overline{\tau}$ is the formula
$\wedge_{F \in \mathcal{P}'} (\tau(F) F \geq 0)$.

The proof of the proposition now follows the proof of
Proposition 13.2 in \cite{BPRbook2}, and uses
the fact that the set of  polynomials
$\bigcup_{F \in \mathcal{P}'}\{(1 -\eps') F - \tau(F)\eps' H_P\}$
has the property that, no $k'+1$ of distinct elements of this set
can have a common zero in 
$\ZZ(\Cr_{k-k'-1}(\Def(Q,H,\zeta)), \R'\la\zeta,\eps'\ra^k)$
by Proposition \ref{prop:general_pos}.
\end{proof}

\begin{proof}[ Proof of Theorem \ref{thm:main}]
Using the conical structure at infinity of semi-algebraic sets, we have the following equality,
$$ 
\begin{aligned}
&\sum_{\sigma \in \{-1,1,0\}^\mathcal{P}} b_0 (\RR(\sigma,\Zer(\mathcal{Q},\R^k)))\\ 
=&\sum_{\sigma \in \{-1,1,0\}^\mathcal{P}} b_0 (\RR(\sigma,\Zer(\mathcal{Q},\R\langle 1/\Omega \rangle^k)\cap B_k(0,\Omega))).
\end{aligned}
$$

By Proposition \ref{prop:nonstrict-to-strict}
it suffices to bound the number of semi-algebraically
connected components of the realizations
$\RR(\tau, \ZZ(Q,\R'^k)\cap B_k(0,\Omega))$,
where $\tau \in \{-1,1\}^{\mathcal{P}'}$ satisfying
\begin{eqnarray}
\label{eqn:tau}
\tau(F) &=& -1, \mbox{ if } F = G, \nonumber \\
\tau(F) &=&  1   \mbox{ if } F = P - \eps \mbox{ or } F = P + \delta \mbox{ for some } P \in \mathcal{P},\\
\tau(F) &=& -1, \mbox { otherwise}. \nonumber
\end{eqnarray}

Using Proposition \ref{prop:main} it suffices to bound the number
of semi-algebraically connected components which are bounded over
$\R'$ of the real algebraic sets
$$
\displaystyle{
\ZZ(\Cr_{k-k'-1}(\Def(Q,H,\zeta)) \cup
\bigcup_{F \in \mathcal{P}''}\{(1 -\eps') F + \tau(F) \eps' H_F\},
\R'\langle\zeta,\eps'\rangle^k)
}
$$
for all $\mathcal{P}'' \subset \mathcal{P}'$ with
$\card \;\mathcal{P''} =j \leq k'$ and all
$\tau \in \{-1,1\}^{\mathcal{P}''}$
satisfying Eqn. \ref{eqn:tau}.

Using Theorem \ref{thm:main2}
we get that the number of such
components is
bounded by
\[
\textstyle\binom{k+1}{k-k'+\#\mathcal{P}''+1}\; (2 d_0)^{k-k'}\; 
{d}_{\mathcal{P}''}\;\max_{F\in \mathcal{P}''}\{ 2d_0,\deg(F) \}^{k'-\#\mathcal{P}''}+2(k-\#\mathcal{P}''+1),
\]
where ${d}_{\mathcal{P}''} = \prod_{F \in \mathcal{P}''} \deg(F)$.

Each $F\in \mathcal{P}'\setminus \{G\}$ is of the form $F\in \{P\pm \ep,P\pm \delta\}$ for some $P \in \mathcal{P}$, and the algebraic sets defined by each of these four polynomials are disjoint.
Thus, we have that
$$\displaylines{
\sum_{\sigma \in \{0,1,-1\}^{\mathcal{P}}} b_0(\RR(\sigma,\ZZ(Q,\R^k)))
}
$$
is bounded by
$$
\displaylines{
\sum_{{\mathcal{I}\subset \mathcal{P}}\atop \#\mathcal{I}\leq k'}
4^{\#\mathcal{I}}\left(\textstyle\binom{k+1}{k-k'+\#\mathcal{I}+1}  \; (2 d_0)^{k-k'}\; {d}_{\mathcal{I}}\; \max_{P\in \mathcal{I}}\{2d_0,d_P \}^{k'-\#\mathcal{I}}  +2(k-\#\mathcal{I}+1)\right).
}$$
\end{proof}

\section{Applications}
\label{sec:applications}
There are several applications of the bound on the number of semi-algebraically
connected components of sign conditions of a family of real polynomials in
discrete geometry.
We discuss below
an application for bounding the number of geometric
permutations of $n$ well separated convex bodies in $\re^d$ induced
by $k$-transversals.

In \cite{GPW96} the authors reduce the problem
of bounding
the number of geometric
permutations of $n$ well separated convex bodies in $\re^d$ induced
by $k$-transversals
to bounding the number
of semi-algebraically connected components realizable sign conditions
of
\[
{2^{k+1} -2 \choose k}{n \choose k+1}
\]
polynomials in $d^2$ variables,
where each polynomial has degree at most $2k$,
on an algebraic variety (the real Grassmannian of $k$-planes in
$\re^d$) in
$\re^{d^2}$ defined by polynomials of degree $2$. The real Grassmanian
has dimension $k(d-k)$. Applying Theorem \ref{thm:main} we obtain that the
number of semi-algebraically connected components of all realizable
sign conditions in this case is bounded by
$$
\displaylines{
\left (k {2^{k+1} -2 \choose k}{n \choose k+1}\right)^{k(d-k)}\; (O(1))^{d^2},
}
$$
which is a strict improvement of the bound,
$$
\displaylines{
\left ({2^{k+1} -2 \choose k}{n \choose k+1}\right)^{k(d-k)}\; (O(k))^{d^2},
}
$$
in \cite[Theorem 2]{GPW96} (especially in the case
when $k$ is close to $d
)$.

As mentioned in the introduction our bound
might also have some relevance in a new method which has been
developed for  bounding the number of incidences between points and
algebraic varieties of constant degree, using a decomposition
technique
based on the polynomial ham-sandwich cut theorem 
\cite{Guth-Katz,Matousek11b,Solymosi-Tao,Zahl}.

\section{Acknowledgment}
The authors would like to thank J. Matousek for bringing this problem
to their attention as well as providing encouragement and useful advice,
and M.-F. Roy for making helpful comments on a first draft of
the paper.
The authors were partially supported by an NSF grant CCF-0915954.
\bibliographystyle{plain}
\bibliography{master}
\end{document}